\documentclass[11pt]{amsart}
\usepackage{xcolor}
\usepackage{graphicx}
\usepackage{dsfont}
\usepackage{amsmath, amssymb}
\usepackage{float}
\usepackage{bbm}
\usepackage{todonotes}
\usepackage{enumerate}

\bibliographystyle{plain}

\newtheorem{theorem}{Theorem}

\newtheorem{prop}{Proposition}

\newtheorem{remark}{Remark}

\newcommand{\be}{\begin{equation}}
\newcommand{\ee}{\end{equation}}
\newcommand{\bea}{\begin{eqnarray}}
\newcommand{\eea}{\end{eqnarray}}
\newcommand{\barr}{\begin{array}}
\newcommand{\earr}{\end{array}}

\newcommand{\bpar}{\begin{equation} \left\{ \begin{array}{lll}}
\newcommand{\epar}{ \end{array}\right. \end{equation} }
\newcommand{\eparn}{ \end{array} \right.}

\newcommand\leftmat{\left(\begin{array}{cc}}
\newcommand\rightmat{\end{array}\right)}
\newcommand\leftvec{\left(\begin{array}{c}}
\newcommand\rightvec{\end{array}\right)}
\newcommand\re{{\rm e}}

\newcommand\R{{\mathbb R}}
\newcommand\C{{\mathbb C}}

\newcommand\N{{\mathbb N}}


\def\XXint#1#2#3{{\setbox0=\hbox{$#1{#2#3}{\int}$}
     \vcenter{\hbox{$#2#3$}}\kern-.5\wd0}}



\providecommand{\D}{\mathrm{d}}

\providecommand{\Lebesgue}{\operatorname{L}} 
\providecommand{\ContinuousSpace}{\operatorname{C}} 
\providecommand{\BV}{\operatorname{BV}}       

\providecommand{\BVzo}{\BV[0,1]}       
\providecommand{\Ltwozo}{\Lebesgue^2(0,1)}       
\providecommand{\Ctwozo}{\ContinuousSpace^2[0,1]}
\providecommand{\Czopen}{\ContinuousSpace(0,1)}

\providecommand{\argdot}{{}\cdot{}}

\renewcommand{\Re}{\operatorname{Re}}
\renewcommand{\Im}{\operatorname{Im}}

\title[Revivals for the Airy equation]{Revivals, or the Talbot effect, for the Airy equation}
\author[]{B. Pelloni$^\pi$ \& D.A. Smith$^\sigma$
}
\address{
$^{\pi}$Heriot-Watt University \& Maxwell Institute  for the Mathematical Sciences, Edinburgh, Scotland. $^\sigma$Yale-NUS College \& National University of Singapore, Singapore.
}

\email{$^\pi$B.Pelloni@hw.ac.uk, $^\sigma$dave.smith@yale-nus.edu.sg}

\date{\today}

\begin{document}
\maketitle

\begin{abstract}
We study Dirichlet-type problems for the simplest third-order linear dispersive PDE, often referred to as the Airy equation.
Such problems have not been extensively studied, perhaps due to the complexity of the spectral structure of the spatial operator. Our specific interest is to determine whether the peculiar phenomenon of revivals, also known as Talbot effect, is supported by these boundary conditions, which for third order problems are not reducible to periodic ones.   We prove that this is the case only for a very special choice of the boundary conditions, for which a new type of weak cusp revival phenomenon has been recently discovered \cite{BFPS}. We also give some new results on the functional class of the solution for other cases.

\medskip
{\em In memory of Vassilis Dougalis, a wonderful teacher and dear friend}
\end{abstract}
\section{Introduction}
The phenomenon of {\em revivals}
in linear dispersive periodic problems,
originally named {\em Talbot effect} and also known as {\em dispersive quantisation},
 has been thoroughly studied in the periodic setting, and is by now well-understood, both from a theoretical and from an experimental point of view. An accepted working definition of this phenomenon states that at specific values of the time variable, called {\em rational times}, and only at these times, the solution is  a linear superposition of a finite number of translated copies of the initial datum.
 This behaviour is particularly striking when the initial profile has jump discontinuities, as these jumps then propagate in the solution at rational times, while at  any other time, jump discontinuities in the initial datum generate a fractal solution profile. This phenomenon is known as {\em fractalisation}. For a recent review and bibliography, see \cite{smith2020revival}.

 The focus of this paper is the case when the boundary conditions are not periodic.
The particular case we study is the Airy equation posed on a finite interval, which we always specify as the interval $(0,1)$:
\begin{align}
&u_t+u_{xxx}=0
\qquad &&x\in(0,1), \;0<t<\infty, \nonumber
\\
&u(\cdot,0)=u_0,\quad u_0\in \Ltwozo, && x\in(0,1).
\label{pseD0}\\
&3\mbox{ boundary conditions at }x=0\mbox{ and }x=1,
\quad\;&& t>0.\nonumber
\end{align}

The case of periodic boundary conditions was first studied in \cite{olver2010dispersive}, and can also be seen as a particular  example of the general theory given in \cite{erdougan2016dispersive}. In this case, both the revival property and its dual phenomenon of fractalisation hold. However, perturbing even slightly the periodicity of the problem can destroy the support for any form of  revivals. For example, if the boundary conditions are quasi-periodic, namely of  the form
\be\label{qp}
\partial_x^ku(0,t)=\re^{i\theta \pi}\partial_x^ku(1,t), \qquad k=0,1,2,\quad t>0,\quad \theta \in
\R,
\ee
the revival property holds only if $\theta\in\mathbb Q$, see \cite{BFP,farmakis2024}.  This is in marked contrast with the case of second order linear dispersive PDEs, when the quasi-periodic boundary problem can be reduced to a periodic one and therefore always supports the revival property.

Another natural class of boundary conditions, at least for second order problems, is defined by the requirement that  the solution vanish at the endpoints. For second order problems, these conditions are called homogeneous Dirichlet boundary conditions, and the resulting boundary value problem can be formulated as a problem periodic on a larger interval, hence it always supports revivals.

This paper is concerned with the ``Dirichlet'' problem for the Airy equation. Since the equation is of third order, it is necessary to prescribe three boundary conditions,  therefore the usual Dirichlet boundary  conditions $u(0,t)=u(1,t)=0$ must be complemented with a third condition. We will call the resulting problem of {\em Dirichlet-type}, and the question we address is whether such a problem supports revivals in the third-order case, in particular for the Airy equation.
Here we study specifically the problem determined by the boundary conditions 
\be\label{dirbc}
u(0,t)=u(1,t)=0,\quad u_x(1,t)=\beta u_x(0,t),\qquad   t>0,\quad \beta\in\R.
\ee
As we argue below,  without loss of generality we can restrict attention to the case $\beta\in[0,1]$. We then show the following:
\begin{itemize}
\item
For $\beta=0$, we indicate that the problem does not support revivals, at least in a weak sense, by showing that the $L^2(0,1)$ norm of the solution, for any initial datum in $\Ltwozo$, decays as a function of time as $t\to\infty$. In addition, under more stringent initial regularity assumptions, we prove that the solution is continuous in $x$, hence cannot support any form of revivals.  
\item
For $0<\beta<1$,  we prove that the solution,  in addition to the same time decay property as the previous case, is of class $\ContinuousSpace^\infty[0,1]$ for all $t>0$, hence it is never discontinuous. Therefore the problem does not support any form of revivals.
  \item
For $\beta=1$, the problem supports  {\em weak cusp revivals}, as is proved in \cite{BFPS}. This means that the solution at the appropriate rational times can be expressed as the sum of two functions: a continuous  function, and a function which revives any initial discontinuity as a finite number of jump discontinuities and logarithmic cusps.

\end{itemize}

For the periodic problem for Airy on $(0,1)$,  with an initial condition $u_0\in\Lebesgue^2(0,1)$, 
the {\em periodic revival} property is completely  characterised by the solution formula at rational time $t=\frac p {(2\pi)^2}q$, given in \cite{erdougan2016dispersive} as
\be\label{perrev}
     u\Big(x,\frac{p}{4\pi^2q}\Big)= \frac{1}{q}
      \sum_{k,m=0}^{q-1}
     G_{p,q}(k,m) u_0\Big(x-2\pi\frac{k}{q}\Big),
\ee
where $G_{p,q}(k,m)\in\mathbb C$ are explicit coefficients. In particular,  $G_{p,1}$  is periodic in  $p$, hence we obtain the following relation, which justifies the term {\em revivals}:
\be\label{pper}
 u\Big(x,\frac{np}{(2\pi)^2}\Big)=G_{p,1}u_0(x),\qquad n\in{\mathbb N}.
 \ee

 The more general  notion of {\em weak revival} was first proposed in \cite{BFP}, and it allows the solution to have a continuous part as well as a periodic revival part.  {\em Cusp revivals} were first described  in \cite{boulton2020new} and then defined in greater generality in \cite{BFPS}, where the case $\beta=1$ is studied in detail. This  property implies that the solution revives any initial jump discontinuity, at the appropriate rational times, as finitely many jumps and logarithmic cusps.
The cusps are generated by the periodic Hilbert transform of a function that encodes the initial jump discontinuities.

Other examples of non-periodic problems are studied in  \cite{farmakis2023new,olver2018revivals}.

\section{Dirichlet-type boundary value problems for the Airy equation}
We consider the time evolution equation associated with the third-order operator
$$
T_\beta=(-i\partial_{x})^3:{\mathcal D_\beta}\longrightarrow \Ltwozo,\qquad \beta\in\R,
$$
where
\be
{\mathcal D_\beta}=\{v\in H^3(0,1):\;v(0)=v(1)=0,\; v_x(1)=\beta v_x(0)\}.
\label{airydom}
\ee
Noting that formally $u_{xxx}=-iT_\beta u$, we associate to this operator the following Dirichlet-type boundary value problem for the function $u(x,t)$:
\begin{align}
&u_t+u_{xxx}=0
\quad &&x\in(0,1), \;0<t<\infty, \nonumber
\\
&u(0,t)=u(1,t)=0,\quad u_x(1,t)=\beta u_x(0,t)
\quad&& 0<t<\tau,\label{pseD}\\
&u(\cdot,0)=u_0,\quad u_0\in \Ltwozo, && x\in(0,1).
\nonumber
\end{align}
The well-posedness of the problem in $\Ltwozo$ for $\beta\neq 0$, follows from general results in spectral theory, see \cite{DunfSchv2}.  In the case $\beta=0$, for which there is no eigenfunction basis, this problem is studied in \cite{pelloni2013spectral}.

\subsubsection*{Functional estimates and the range of $\beta$}
The real parameter $\beta$ gives the coupling of the values of first derivative at the end points of the interval. The cases of particular interest are $\beta=0$, $0<\beta<1$ and $\beta=1$, which we consider separately.  If $\beta=0$, the third boundary condition is
$u_x(1,t)=0$, and there is no coupling. If $0<\beta<1$ the boundary conditions are coupled, but in general the problem is not self-adjoint and indeed the associated eigenvalues are complex.    If $\beta=1$, so that $u_x(1,t)= u_x(0,t)$,  the problem is self-adjoint \cite{BFPS}.  We will review these three cases, that
illustrate the possible behaviour of any well-posed Dirichlet-type problem for this third order PDE.

Assume the equation admits a classical solution $u(x,t)$. Multiplying the equation by $2u$, we find after a formal integration by parts that
\be\label{fe1}
\|u(x,t)\|^2_{\Ltwozo}=\|u_0(x)\|^2_{\Ltwozo}-(1-\beta)\int_0^tu_x^2(0,s)ds .
\ee
Hence, if $\beta>1$, 
$$
\|u(x,t)\|^2_{\Ltwozo}\geq \|u_0(x)\|^2_{\Ltwozo},
$$
and the $\Lebesgue^2$ norm of the solution is not necessarily  bounded as a function of $t$, hence the problem may be ill-posed.%
\footnote{
    Indeed, it can be shown that for $\beta>1$ the problem is ill-posed, with the solution becoming unbounded for arbitrarily small times, \cite{smith2012,Smi2012b}.
}

Hence we assume that $\beta\leq 1$.
In this case, as is easily verified, both $T_\beta$ and its adjoint $T_\beta^*$ are  dissipative,  namely
$$
\langle T_\beta f,f\rangle\leq 0,\quad \langle T_\beta ^*f,f\rangle\leq 0\qquad \forall f\in\Ltwozo,
$$
and the associated boundary value problem  \eqref{pseD} admits a unique solution in $\Ltwozo$ \cite{bsz}.

This problem, specifically in operator-theoretic terms, has been studied in detail in \cite{pelloni2013spectral}. The study of the adjoint operator $T_\beta^\star$ is equivalent to reversing the direction of time and mapping $\beta\mapsto1/\beta$, i.e. in this case to considering the PDE
$
u_t-u_{xxx}=0$.
Because of this identification, it is enough to consider $\beta\in [0,1]$.\footnote{The case $\beta<0$ is entirely analogous, yielding a well -poseed problems for $-1\leq \beta<0$ for which similar results hold.}

It is straightforward to establish a  regularising estimate for the operator $T_\beta$, with $0\leq\beta<1$, a variation on, and slight generalisation of, the one given in \cite{bsz} for the case that $\beta=0$.\footnote{In \cite{bsz}, the equation studied includes the lower order term $u_x$.}

\begin{prop}\label{estprop}
Let $u_0\in \Ltwozo$ be given. For all $t\geq 0$,  and $0\leq \beta<1$, the solution of $u_t=iT_\beta u$ with $u(x,0)=u_0(x)$  satisfies

\begin{align}
&\|u(\cdot, t)\|_{\Ltwozo}^2\leq \|u_0\|_{\Ltwozo}^2,
\label{est0}\\
&\int_0^t u^2_x(0,s)ds\leq\frac 1 {1-\beta} \|u_0\|_{\Ltwozo}^2,
\label{est1}\\
&\int_0^1xu^2(x,t)dx+3\int_0^t\int_0^1u_x^2(x,s)dxds\leq \frac 1 {1-\beta}\|u_0\|_{\Ltwozo}^2.
\label{est2}\\
&\lim_{t\to\infty} \|u(\cdot, t)\|^2_{\Lebesgue^2(0,1)}=0.
\label{tlim}
\end{align}
\end{prop}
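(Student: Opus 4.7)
Estimates \eqref{est0} and \eqref{est1} are immediate from the energy identity \eqref{fe1}: since $\beta\leq 1$ the correction term is nonpositive, giving \eqref{est0}; rearranging \eqref{fe1} and dividing by $1-\beta>0$ gives \eqref{est1}.

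For \eqref{est2} the plan is to use the multiplier $2xu$. Multiplying the PDE by $2xu$ and integrating over $(0,1)$ yields
\[
\frac{d}{dt}\int_0^1 xu^2\,\D x+2\int_0^1 xu\,u_{xxx}\,\D x=0.
\]
Two integrations by parts in the second term, using $u(0,t)=u(1,t)=0$ together with $u_x(1,t)=\beta u_x(0,t)$, collapse the pointwise boundary contributions and reduce the spatial integral to $3\int_0^1 u_x^2\,\D x-\beta^2 u_x^2(0,t)$. Integrating in $t$, bounding the initial term by $\int_0^1 xu_0^2\,\D x\leq\|u_0\|_{\Ltwozo}^2$, and using \eqref{est1} to dominate $\beta^2\int_0^t u_x^2(0,s)\,\D s$ yields a coefficient of $\frac{1-\beta+\beta^2}{1-\beta}$, which is bounded by $\frac{1}{1-\beta}$ on $[0,1]$ because $\beta^2\leq\beta$ there.

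For \eqref{tlim}, the energy identity \eqref{fe1} shows $t\mapsto\|u(\cdot,t)\|_{\Ltwozo}^2$ is nonincreasing, hence has a limit $L\geq 0$. On the other hand, \eqref{est2} gives $\int_0^\infty\|u_x(\cdot,s)\|_{\Ltwozo}^2\,\D s<\infty$, so we may extract a sequence $t_n\to\infty$ along which $\|u_x(\cdot,t_n)\|_{\Ltwozo}\to 0$. Because $u(0,t_n)=u(1,t_n)=0$, the Poincar\'e inequality on $H_0^1(0,1)$ gives $\|u(\cdot,t_n)\|_{\Ltwozo}\leq\pi^{-1}\|u_x(\cdot,t_n)\|_{\Ltwozo}\to 0$, which forces $L=0$.

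All of the above is formal for $u_0\in\Ltwozo$. I would first carry out the calculations for $u_0\in\mathcal D_\beta$, where the solution is classical and the integrations by parts are fully justified, and then extend to $L^2$ initial data by density using the well-posedness of \eqref{pseD} stated before the proposition. The main obstacle is \eqref{tlim}: the monotonicity of $\|u(\cdot,t)\|_{\Ltwozo}^2$ alone does not force the limit to be zero, so one really does need the smoothing estimate \eqref{est2} to produce a subsequence along which Poincar\'e can be invoked.
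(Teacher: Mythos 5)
Your proof is correct and follows essentially the same route as the paper: the multipliers $2u$ and $2xu$ give \eqref{est0}--\eqref{est2} via the same identities, and \eqref{tlim} is deduced from the resulting integrability of $\|u_x(\cdot,s)\|^2_{\Ltwozo}$ over $(0,\infty)$. The only differences are minor: your boundary term $\beta^2 u_x^2(0,t)$ in the $2xu$ computation is the precise one (the paper writes $\beta u_x^2(0,s)$, which still yields the stated bound since $\beta^2\le\beta$ on $[0,1]$), and for \eqref{tlim} you combine monotonicity of the norm with a Poincar\'e subsequence argument, whereas the paper argues that $t\mapsto\|u(\cdot,t)\|^2_{\Ltwozo}$ is integrable with integrable derivative --- both rest on the same estimates.
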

\begin{proof}
Assuming that $u$ is a strong (smooth) solution (a limiting argument, as in \cite{bsz} then gives the result for weak solutions), integration of $uu_t=-uu_{xxx}$ with the given boundary conditions yields~\eqref{fe1}, and, since $\beta<1$, this identity implies the two bounds \eqref{est0} and \eqref{est1}.

On the other hand, integration by parts of $2xuu_t=-2xuu_{xxx}$ using the given boundary conditions gives
\begin{align*}
\int_0^1xu^2(x,t)dx&+3\int_0^t\int_0^1u^2_x(x,s)dxds=\int_0^1xu_0^2(x)dx+\beta\int_0^tu_x^2(0,s)ds
 \\
&\leq \int_0^1xu_0^2(x)dx+\frac{\beta}{1-\beta}\int_0^1u_0^2(x)dx\leq \frac{1}{1-\beta}\|u_0\|^2_{\Ltwozo},
\end{align*}
which is \eqref{est2}.
By  \eqref{est1}, $f(t):=\|u(\cdot, t)\|_{\Lebesgue^2(0,1)}\in \Lebesgue^2(0,\infty)$. 
In addition, by integration by parts,  $f'(t)=- u^2_x(0,t)$, so that $f'(t)\in\Lebesgue^2(0,\infty)$ by \eqref{est1}.
This implies that  $\displaystyle{\lim_{t\to\infty}f(t)=0}$ so that \eqref{tlim}  must hold.
\end{proof}

These estimates imply that the function
$$
t\mapsto \|u(\cdot,t)\|_{H^1(0,1)}
$$
is in $\Lebesgue^2(0,t)$, so that for almost all $t>0$, $u(\cdot, t)\in H^1(0,1)$. In particular, for almost all $t>0$, the
solution admits a well  defined weak first derivative.  

\begin{remark} Heuristically,  fractalisation is a necessary consequence of the revival of initial discontinuities at every rational value of the time,  see \cite{mclau}. Therefore, if the  problem supported a form of revival, it would also present the dual property of fractalisation. This property would imply that the solution is nowhere differentiable,  which   cannot be the case given that  $u(\cdot, t)\in H^1(0,1)$ for almost all $t>0$. 
\end{remark}
This remark, as well as the time decay of the $L^2(0,1)$ norm of the solution,  suggests that for $0\leq \beta<1$, revivals are not supported. 

The main result of this paper is the precise  formulation  and proof of this statement.



\subsubsection*{The spectral structure of $T_\beta$}
Despite the formal similarity of the two problems obtained by choosing $\beta=0$ or $\beta\neq 0$, the respective spectral structure is fundamentally different.
Indeed, it can be shown that for the case $\beta=0$, although the operator $T_\beta$ has infinitely many discrete eigenvalues, the associated eigenfunctions do not form a Riesz basis, and that there is no generalised Fourier series representation for the solution of this problem~\cite{jackson, pelloni2005spectral, pelloni2013spectral}.

On the other hand, if $0<\beta\leq 1$, there is an infinite family of discrete, generally complex, eigenvalues, and the associated eigenfunctions and their adjoints form a complete biorthogonal basis. Hence it is possible to represent the solution of the boundary value problem as  a generalised Fourier series.

The case $\beta=1$ is the only one resulting in a self-adjoint problem, for which the eigenvalues are purely real.  On the basis of this spectral structure, we distinguish three separate cases: 
\begin{itemize}
\item[(a)]
$0<\beta<1$: there is a generalised Fourier series representation for the solution. We prove that the solution is smooth in $x$ and its $L^2(0,1)$ norm decays in $t$. Hence no revival property holds. 
 \item[(b)]$\beta=0$: there is no generalised Fourier series representation for the solution. The  $L^2(0,1)$ norm of the solution decays in $t$ and, under slightly higher regularity assumptions, we prove that the solution is continuous, hence no revival property can hold.
 \item[(c)]
$\beta=1$: there is a generalised Fourier series representation for the solution, and a weak cusp revival property holds, as proved in \cite{BFPS}.
\end{itemize}


\subsection*{The case $0<\beta<1$}

We prove in Theorem \ref{thm2} that, for any integrable initial condition, in addition to the time decay of its $L^2(0,1)$, the solution is actually of class $\ContinuousSpace^\infty$, for all $t>0$. This implies that this problem cannot support any type of revivals. Indeed, any form of the revival property implies that the solution is not continuous at countably many values of $t$.

\subsubsection*{Spectral structure}
If $\beta\neq 0$, the operator is associated to a biorthogonal basis for $\Ltwozo$,
composed of the eigenfunctions of $T_\beta$ and those of the adjoint $T_\beta^\star$ \cite{DunfSchv2,jackson}. This statement holds provided all eigenvalues are simple, which we henceforth assume to be the case, based on the argument given below that shows that all but finitely many of them are simple.
Hence we can study the generalised Fourier series representation of the solution with a view to verify the conjecture that the revival property does not hold in this case.

As shown in \cite{pelloni2005spectral}, the eigenvalues are given by $k_n^3$, where the $k_n$'s are the zeros of
$$
\Delta(k)=\beta[\re^{-ik}+\alpha \re^{-i\alpha k}+\alpha^2\re^{-i\alpha^2 k}]+\re^{ik}+\alpha \re^{i\alpha k}+\alpha^2\re^{i\alpha^2 k}, \quad \alpha=\re^{2\pi i/3}.
$$

While this transcendental equation cannot be solved exactly,  
the position of finitely many $k_n$ can be found numerically using an algorithm based on the argument principle.
For example, if $\beta=10^{-6}$, then the first few lie at the crosses in figure~\ref{fig:delta-roots-locus}.
\begin{figure}[ht]
    \centering
    \includegraphics[width=0.8\linewidth]{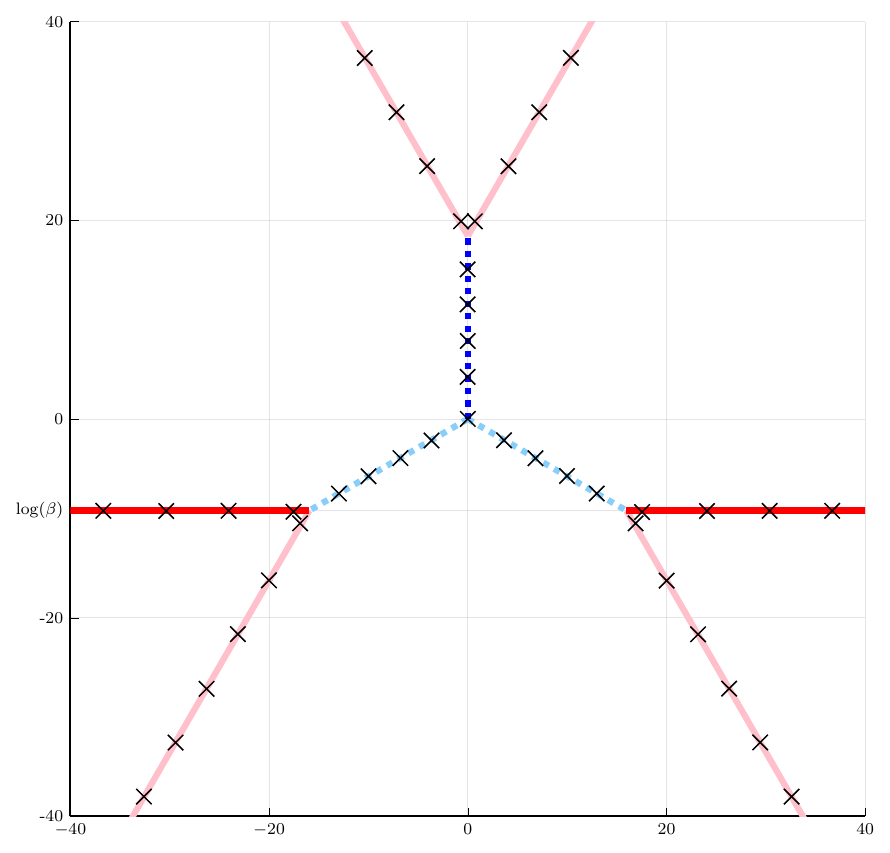}
    \caption{
        The zeros of $\Delta$ for $\beta=10^{-6}$.
        Also shown are line segments and rays along which the zeros lie, approximately.
    }
    \label{fig:delta-roots-locus}
\end{figure}
It appears that they lie approximately along the dashed blue line segment and the solid red rays, and their lighter colored rotations about the origin by $2\pi/3$, $4\pi/3$.
The rotational symmetry is immediately confirmed by the identity $\Delta(\alpha k) = \alpha^2\Delta(k)$, and implies that we only need to characterise the location of the solid red horizontal rays and dashed blue vertical line segment.
To this end, following the approach of~\cite{langer},
consider $k$ with $\Re(k)\gg\lvert\Im(k)\rvert$, and observe that the dominant terms in $\Delta(k)$ are $\beta\alpha e^{-i\alpha k}$ and $\alpha^2 e^{i\alpha^2k}$, while all other terms are $\mathcal{O}(1)$ in $\Re(k)$.
Imposing that the leading order terms approximately cancel requires that $k_n\approx \left(2n-\frac13\right)\pi + i \log(\beta)$ for  $n\to\infty$.
The existence of zeros close to $-\left(2n-\frac13\right)\pi + i \log(\beta)$ is similarly justified. Note that since the zeros of $\Delta'$ are asymptotically located at $2n\pi + i\log(\beta)$, all but finitely many of the zeros of $\Delta$ must be simple. 
The zeros close to the dashed blue line segment arise when $\beta$ is very small so that, for $\lvert k \rvert$ not too large, the terms in $\Delta(k)$ with factor $\beta$ are negligible.
Using the same kind of asymptotic argument as above, choosing a region of $\mathbb{C}$ such that two terms dominate the remaining one and requiring that those leading order terms cancel, we find that there is a finite number of zeros around $i\left(2n-1\right)\frac\pi{\sqrt3}$, for $n$ positive but not too large.
Note that, although $0$ is a zero of $\Delta$, it is not the cube root of an eigenvalue, because no quadratic expression satisfies the boundary conditions.

The above arguments show that on the horizontal (red) rays, 
\begin{align}
    k_n = \kappa_n + i\log\beta + \epsilon_n, &\qquad \mbox{where, as } |n|\to\infty,\label{asymp1}\\
    \kappa_n = \pi\left( 2n - \frac{\operatorname{sgn}(n)}{3} \right), &\qquad
    \epsilon_n = \mathcal{O}\left(e^{-\sqrt{3}\pi\lvert n \rvert}\right).\nonumber
\end{align}
The corresponding eigenfunctions are given by
\be\label{ev1}
    X_n = \sum_{j=0}^2 e^{-i\alpha^jk_nx}(e^{-i\alpha^{j+1}k_n} - e^{-i\alpha^{j+2}k_n}).
\ee
It is not difficult to show that  the adjoint operator has eigenvalues $k_n^\star = \overline{k_n}$ and eigenfunctions $X_n^\star$ given by the same formula as above for $X_n$ but with $k_n^\star$ replacing $k_n$.

\subsubsection*{General solution formula}
We can express the solution of this boundary value problem as a generalised Fourier series, namely
\be\label{sol1rep}
    u(x,t) = \sum_n e^{-ik_n^3t} X_n(x) \frac{\langle u_0 , X_n^\star \rangle}{\langle X_n , X_n^\star \rangle}.
\ee

We can now prove the following regularity result for the solution, which in particular confirms that no revival property can be supported by boundary value problems in this class. 

\begin{theorem}\label{thm2}
    Suppose $u_0\in \Ltwozo$.
    Then, for all $t >0$, the solution $u$ of the problem \eqref{pseD} with $0<
\beta<1$ satisfies $u({}\cdot{},t)\in \ContinuousSpace^\infty[0,1]$.
\end{theorem}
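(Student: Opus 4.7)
The plan is to work with the generalised Fourier series representation \eqref{sol1rep} and show that, for any fixed $t>0$, it converges absolutely and uniformly on $[0,1]$ together with every $x$-derivative. The decisive point is the super-exponential in $|n|$ time decay of the modal factors $\re^{-ik_n^3 t}$, which is driven by the fact that $\log\beta<0$ when $\beta\in(0,1)$. Expanding using \eqref{asymp1}, one finds
\[
\Im(k_n^3)=3\kappa_n^2\log\beta+O(1)\sim 12\pi^2(\log\beta)\,n^2,
\]
so that $|\re^{-ik_n^3 t}|=\re^{t\Im(k_n^3)}\le C(t)\,\re^{-C_1 n^2 t}$ for some $C_1=C_1(\beta)>0$. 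This Gaussian-type decay in $n$ is the mechanism that will defeat all other factors in \eqref{sol1rep}.

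Second, from the explicit form \eqref{ev1} a direct termwise calculation gives, uniformly in $x\in[0,1]$,
\[
|X_n^{(m)}(x)|\le C_m(1+|k_n|)^m\re^{c|k_n|},
\]
with the same type of bound for $X_n^\star$: each exponential $\re^{-i\alpha^jk_nx}$ has modulus at most $\re^{c|k_n|}$ on the interval (the worst growth coming from $j=1$ when $\Re(k_n)>0$ and $j=2$ when $\Re(k_n)<0$), and the constant prefactors $\re^{-i\alpha^\ell k_n}$ satisfy the analogous bound. Cauchy--Schwarz then yields $|\langle u_0,X_n^\star\rangle|\le C\|u_0\|_{\Ltwozo}\re^{c|k_n|}$. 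Assuming the matching asymptotic lower bound
\[
|\langle X_n,X_n^\star\rangle|\ge c'\re^{-c''|k_n|},\qquad |n|\gg 1,
\]
termwise differentiation of \eqref{sol1rep} produces a bound of the form
\[
|\partial_x^m u(x,t)|\le C\|u_0\|_{\Ltwozo}\sum_n (1+|k_n|)^m\re^{C'|k_n|}\re^{-C_1 n^2 t},
\]
which converges absolutely and uniformly on $[0,1]$ because $|k_n|=O(n)$, and the Gaussian factor $\re^{-C_1 n^2 t}$ swamps any fixed exponential in $|n|$. Standard Weierstrass-type reasoning then yields $u(\cdot,t)\in\ContinuousSpace^\infty[0,1]$.

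The main obstacle is precisely the asymptotic lower bound on $|\langle X_n,X_n^\star\rangle|$. We would establish it via a careful computation of $\int_0^1 X_n(x)\overline{X_n^\star(x)}\,\D x$ using \eqref{ev1} together with the analogous formula for $X_n^\star$, identifying which of the nine products of exponentials contribute at leading order and verifying that these dominant contributions do not accidentally cancel. The argument is in the spirit of the asymptotic techniques developed in \cite{langer,pelloni2005spectral}, but must be carried out specifically for this biorthogonal family.
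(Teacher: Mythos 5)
Your outline follows the same route as the paper's proof: represent the solution by the biorthogonal series \eqref{sol1rep}, bound $X_n^{(m)}$ and $\langle u_0,X_n^\star\rangle$ exponentially in $|k_n|$, control $\langle X_n,X_n^\star\rangle$ from below, and let the factor $\lvert e^{-ik_n^3t}\rvert = O\bigl(e^{3\kappa_n^2 t\log\beta}\bigr)$, Gaussian in $n$ since $\log\beta<0$, force absolute uniform convergence of the series and of all termwise $x$-derivatives. The one genuine difference is in how the normalisation constants are handled. You only ask for the weak lower bound $\lvert\langle X_n,X_n^\star\rangle\rvert\geq c'e^{-c''|k_n|}$ and let the Gaussian factor absorb whatever exponential growth the ratio then has; the paper instead computes the leading asymptotics outright, splitting the nine products in $\int_0^1 X_n\overline{X_n^\star}$ into the diagonal terms ($\alpha^j=\alpha^{-r}$, which integrate to $1$) and the off-diagonal terms (which carry a $1/k_n$ factor), obtaining $\langle X_n,X_n^\star\rangle = e^{\sqrt3|k_n|}+O\bigl(e^{2\sqrt3\pi|n|}/n\bigr)$, so that the ratio $\langle u_0,X_n^\star\rangle X_n/\langle X_n,X_n^\star\rangle$ is already bounded (with $O(n^m)$ growth for the $m$-th derivative) before the time factor is invoked. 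Your version buys robustness --- any subexponential degeneracy of the normalisation would still be tolerable --- at the cost of leaving the key non-degeneracy statement unproven; you correctly flag this as the main obstacle, and the computation you sketch (identify the dominant products, check they do not cancel) is exactly the one the paper carries out, with the reassuring outcome that $\langle X_n,X_n^\star\rangle$ is in fact exponentially \emph{large}, not merely bounded away from exponential smallness. To make your argument complete you would need to execute that computation; as written it is a stated assumption rather than a proved lemma.
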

\begin{proof}
Using  the formula \eqref{ev1} for the eigenfunctions, we can compute
\begin{align*}
    \left\langle X_n , X_n^\star \right\rangle
    \hspace{-4em} &\hspace{4em}= \sum_{j,r=0}^2 \int_0^1 e^{-i\alpha^jk_nx} \overline{e^{-i\alpha^r\overline{k_n}x}} dx \left( e^{-i\alpha^{j+1}k_n}-e^{-i\alpha^{j+2}k_n} \right) \overline{\left( e^{-i\alpha^{r+1}\overline{k_n}}-e^{-i\alpha^{r+2}\overline{k_n}} \right)} \\
    &= \sum_{j=0}^2 \left( e^{-i\alpha^{j+1}k_n}-e^{-i\alpha^{j+2}k_n} \right) \left( e^{i\alpha^{j+2}k_n}-e^{i\alpha^{j+1}k_n} \right)
    \\
    &\hspace{1em} + \sum_{\substack{j,r=0: \\ \alpha^j \neq \alpha^{-r}}}^2 \left[ \frac{e^{-ik_n(\alpha^j-\alpha^{-r})}-1}{-ik_n(\alpha^j-\alpha^{-r})} \right] \left( e^{-i\alpha^{j+1}k_n}-e^{-i\alpha^{j+2}k_n} \right) \left( e^{i\alpha^{-(r+1)}k_n}-e^{i\alpha^{-(r+2)}k_n} \right).
\end{align*}
Recalling expression \eqref{asymp1} for the asymptotic behaviour of the eigenvalues,  we find that the first sum, as $n\to\pm\infty$,  is  $e^{\sqrt3\lvert k_n \rvert} + \mathcal{O}(e^{\sqrt3\pi\lvert n \rvert})$.

The terms in the second sum each take one of the forms
$$
    \frac{e^{-ik_n(\alpha^\ell-\alpha^m)}}{-ik_n(\alpha^j-\alpha^{-r})}, \qquad \frac{e^{-ik_n(\alpha^j(1+\alpha^\ell)-\alpha^{-r}(1+\alpha^m))}}{-ik_n(\alpha^j-\alpha^{-r})},
$$
for $\ell,m\in\{0,1,2\}$ in the first form and $\ell,m\in\{1,2\}$ in the second form.
Therefore, these terms are all dominated by the leading order term from the first sum.
In summary, we obtain
$$
    \left\langle X_n , X_n^\star \right\rangle = e^{\sqrt3\lvert k_n \rvert} + \mathcal{O}\left(\frac{e^{2\sqrt3\pi\lvert n \rvert}}n\right).
$$
By H\"older's inequality,
$$
    \langle u_0 , X_n^\star \rangle
    \leq \left\lVert u_0 \right\rVert_{\Lebesgue^1(0,1)} \left\lVert X_n^\star \right\rVert_{\Lebesgue^\infty(0,1)}
    \lesssim \left\lVert u_0 \right\rVert_{\Lebesgue^1(0,1)} e^{\frac{\sqrt3}2\lvert \kappa_n \rvert},
$$
with constant uniform in $n$.
Therefore
$$
    \left\lVert \frac{\left\langle u_0 , X_N^\star\right\rangle}{\left\langle X_n , X_n^\star\right\rangle} X_n \right\rVert_{\Lebesgue^\infty(0,1)}
$$
is bounded as $n\to\pm\infty$ and, for the $m$-th derivative, we obtain
$$
    \left\lVert \frac{\left\langle u_0 , X_N^\star\right\rangle}{\left\langle X_n , X_n^\star\right\rangle} X_n^{(m)} \right\rVert_{\Lebesgue^\infty(0,1)} = \mathcal{O}\left( n^m \right).
$$
On the other hand, computing the imaginary part of the eigenvalues yields\
\begin{align*}
    \Im(k_n^3) &= \Im\left( \kappa_n^3 + 3i\log(\beta)\kappa_n^2 - 3[\log(\beta)]^2\kappa_n - i[\log(\beta)]^3 + \mathcal{O}\left(n^2e^{-\sqrt3\pi n}\right) \right)
    \\&= 3\log(\beta)\kappa_n^2 + \mathcal{O}(1).
\end{align*}
It follows that
$$
    e^{-ik_n^3t} = \mathcal{O}\left(e^{3\kappa_n^2t\log(\beta)}\right)
$$
and, since $t\log(\beta)<0$ for $0<\beta<1$, this factor is decaying exponentially.
Hence, for all $t>0$, the series in equation~\eqref{sol1rep} for $u(x,t)$ converges absolutely uniformly in $x\in[0,1]$, and so do its term-by-term spatial derivatives.
From the fact that  $X_n\in \ContinuousSpace^\infty[0,1]$, it follows that, for all $t>0$, $u({}\cdot{},t)\in \ContinuousSpace^\infty[0,1]$.
\end{proof}

\subsection*{The case $\beta=0$}

For this case, we know that the operator has infinitely many real eigenvalues, but that the eigenfunctions do not form a Riesz basis for the associated $\Lebesgue^2$ space \cite{pelloni2013spectral}.

This boundary value problem was studied in~\cite{bsz}, where a version of Proposition \ref{estprop}  is proved. In that paper, it is also reported how, when the initial datum is in the domain of the operator,
$u_0\in {\mathcal D_0}$,  there exist  a unique classical solution $u\in \ContinuousSpace([0,\infty); H^3(0,1))\cap \ContinuousSpace^1([0,\infty); \Ltwozo)$ of the boundary value problem.
When $u_0\in \Ltwozo$, there is a unique solution in the same class.

\subsubsection*{Decay in time}

 We now show how the assumption of revivals leads to a contradiction. This contradiction is reached if  the solution is assumed to satisfy a specific form of weak revival property, including an additional time decay property of its  continuous component. 
 
 \medskip
 More precisely, suppose that the solution $u(x,t)$  of  \eqref{pseD} with $\beta=0$ has the following weak revival property:

\smallskip
\noindent
{\em 
At rational times  $t=\frac {p}{(2\pi)^2}$, $u(x,t)$  can be written as $u(x,t)=U_{\mathcal R}(x,t)+U_{\mathcal C}(x,t)$, where $U_{\mathcal C}(x,t)$ is continuous and decays as $t\to\infty$, while 
$U_{\mathcal R}(x,t)$ satisfies the periodic revival property \eqref{perrev}, possibly with $u_0$ modified by multiplication with a $C^\infty$ function (this does not change the location and number of the initial jump discontinuities). 
}
  
\smallskip
The decay of $\|u(\argdot,t)\|_{\Lebesgue^2(0,1)}$ as $t\to\infty$, given by proposition \ref{estprop},  implies that no such weak revival property can hold.  

Indeed, if it did, then for each fixed $p\in\mathbb N$ there would be arbitrarily large times $t_n=\frac{np}{(2\pi)^2}$, $n\in\N$, where the revival part of solution $U_{\mathcal R}(x,t)$, according to \eqref{pper},   revives  the initial condition, up to a fixed multiple, hence for infinitely many values of the time,  its norm is equal to a multiple of $\|u_0\|_{\Lebesgue^2(0,1)}$.  Therefore it cannot hold that $\displaystyle{\|U_{\mathcal R}(\argdot,t)\|_{\Lebesgue^2(0,1)}\to_{t\to\infty}0}$.

In particular, the solution of this problem cannot have the periodic revival property ($U_{\mathcal C}(x,t)=0$) 
or a weak version of the type described in  theorem \ref{lkdvthm} below for the case $\beta=1.$  Indeed, in the latter case, the continuous part of the solution is an exponentially decaying function of $t$ as $t\to\infty$. 
%
%
%
%
%

\subsubsection*{Continuity properties of the solution}
Using the unified transform method of Fokas \cite{ABS2022a,deconinck2014method,fokas2005transform, smithfokas}, it can be shown that the  boundary value problem corresponding to $\beta=0$, for a smooth $u_0$, has a unique solution $u(x,t)$, and that solution admits the explicit  contour integral representation
\begin{multline} \label{eqn:soln.beta0}
    u(x,t) = \frac 1 {2\pi}\int_{\mathbb R} \re^{ikx+ik^3t}\widehat{u_0}(k)dk \\
    + \left\{\int_{\Gamma^+}\re^{ikx+ik^3t}\frac {\zeta^+(k)}{\Delta(k)}dk+
    \int_{\Gamma^-}\re^{ik(x-1)+ik^3t}\frac {\zeta^-(k)}{\Delta(k)}dk\right\}.
\end{multline}
In this expression,  $\Gamma^{\pm}$ are contours in $\C^\pm$, defined as the lines where the exponential $\re^{ik^3t}$ is purely oscillatory. Namely, the contours $\Gamma^\pm$ are the positively oriented boundaries of sectors $k\in\C^\pm$ with $\Re(ik^3)>0$, 
    \begin{align}\label{Delta} 
       & \Delta(k)  = \re^{-ik} + \alpha\re^{-i\alpha k} + \alpha^2\re^{-i\alpha^2k},\\
       & \zeta^-(k) = \widehat{u_0}(k) + \alpha\widehat{u_0}(\alpha k) + \alpha^2\widehat{u_0}(\alpha^2k), \label{zeta-}\\
       & \zeta^+(k) = \re^{-ik}\zeta^-(k) - \widehat{u_0}(k)\Delta(k) \label{zeta+}\\
        &= \alpha \left( \re^{-ik} \widehat{u_0}(\alpha k) - \re^{-i\alpha k} \widehat{u_0}(k) \right) + \alpha^2 \left( \re^{-ik} \widehat{u_0}(\alpha^2k) - \re^{-i\alpha^2k} \widehat{u_0}(k) \right).\nonumber
    \end{align}
The zeros of $\Delta$  are the cube roots of the eigenvalues of the spatial differential operator, and it can be shown (as we detail in the proof of Theorem \ref{thm:beta0.continuous} below) that they are not on the integration contours. Hence the integrands  in the last two integrals are analytic functions on the contours, and the integrals are well defined.

This integral representation is not equivalent to a discrete generalised Fourier series. Nevertheless, we are able to establish  continuity of $u(\cdot, t)$ for all $t>0$, provided the initial condition, while still discontinuous, has specific piecewise regularity. The continuity of $u$ as a function of $x$ implies that no revival property can be  supported.

\begin{theorem} \label{thm:beta0.continuous}
    Suppose that $u_0\in\Ctwozo$ piecewise.
    Let $u(x,t)$ be defined by equation~\eqref{eqn:soln.beta0}.
    Then, for all $t>0$, $u(\argdot,t)\in\Czopen$.
\end{theorem}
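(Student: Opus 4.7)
The plan is to bound each of the three contributions to the representation~\eqref{eqn:soln.beta0} separately and show that each is continuous in $x$ on $(0,1)$. The preparatory step is to check that the representation is meaningful, which reduces to showing that $\Delta(k)\neq 0$ along $\Gamma^\pm$ apart from a removable singularity at the origin. At $k=0$, a direct Taylor expansion gives $\Delta(0)=\Delta'(0)=0$ with $\Delta''(0)=-3$ (using $1+\alpha+\alpha^2=0$ and $\alpha^3=1$), while the same computation in \eqref{zeta+}, \eqref{zeta-} yields $\zeta^\pm(0)=\zeta^{\pm\,\prime}(0)=0$, so $\zeta^\pm/\Delta$ extends analytically through the origin. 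Away from $k=0$, I would combine the symmetry $\Delta(\alpha k)=\alpha^2\Delta(k)$ with a dominant-term analysis: on each ray of $\Gamma^\pm$ exactly one of the exponentials $e^{-i\alpha^j k}$ grows like $e^{\sqrt3\lvert k\rvert/2}$ while the other two are bounded or exponentially small, which gives both $\lvert\Delta(k)\rvert\asymp e^{\sqrt3\lvert k\rvert/2}$ as $\lvert k\rvert\to\infty$ on $\Gamma^\pm$ and a reverse-triangle lower bound of the same order for $\lvert k\rvert$ beyond some threshold. The bounded portion of $\Gamma^\pm$ must be dispatched separately.

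The first integral in~\eqref{eqn:soln.beta0} is the free-space Airy evolution of $u_0$ extended by zero to $\mathbb{R}$. By Parseval this equals the convolution of the extended datum with the fundamental solution of $w_t+w_{zzz}=0$ on the line, which is a rescaled Airy function, smooth, bounded and integrable for every $t>0$. Since $u_0\in\Ltwozo\subset\Lebesgue^1(0,1)$ extends to $\Lebesgue^1(\mathbb{R})$, this convolution is $\ContinuousSpace^\infty(\mathbb{R})$ in $x$ for every $t>0$; in particular, the jumps of $u_0$ contribute nothing discontinuous to this piece.

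For the boundary integrals, the key observation is that along $\Gamma^+$ we have $\lvert e^{ik^3t}\rvert = 1$ (by definition of the contour) while $\lvert e^{ikx}\rvert = e^{-x\,\Im k}$ decays exponentially as $\lvert k\rvert\to\infty$ for every $x>0$, since $\Im k>0$ on $\Gamma^+\setminus\{0\}$. One integration by parts in $\widehat{u_0}(k)$, which the piecewise $\Ctwozo$ assumption legitimises, produces $\lvert\widehat{u_0}(k)\rvert\lesssim e^{\sqrt3\lvert k\rvert/2}/\lvert k\rvert$ on the rays of $\Gamma^+$ (with the dominant $e^{\sqrt3\lvert k\rvert/2}$ coming from $\sin(2\pi/3)=\sqrt3/2$). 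Inserting this estimate, together with the easier bounds $\lvert\widehat{u_0}(\alpha^j k)\rvert\lesssim 1/\lvert k\rvert$ for the rotated arguments (which sit in lower half-planes), into expression \eqref{zeta+} and comparing with the lower bound $\lvert\Delta(k)\rvert\asymp e^{\sqrt3\lvert k\rvert/2}$ gives $\lvert\zeta^+(k)/\Delta(k)\rvert\lesssim 1/\lvert k\rvert$ on $\Gamma^+$. The full $\Gamma^+$-integrand is therefore dominated by $\lvert k\rvert^{-1}e^{-\sqrt3\lvert k\rvert x/2}$, which is integrable and uniform in $x$ on any compact subset of $(0,1)$. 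The dominated convergence theorem then yields continuity of the $\Gamma^+$ contribution in $x\in(0,1)$, and the $\Gamma^-$ integral is handled identically, using the decay of $e^{ik(x-1)}$ on $\Gamma^-$ for $x<1$.

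The main obstacle is the initial step of locating the zeros of $\Delta$. The dominant-balance argument disposes of the behaviour at infinity cleanly, but excluding zeros of $\Delta$ on the bounded portion of $\Gamma^\pm$ is more delicate; unlike in the $0<\beta<1$ case the operator $T_0$ is non-self-adjoint with a more intricate spectral picture \cite{pelloni2013spectral}, so a precise localisation of the zeros of $\Delta$, via the characteristic equation governing the eigenvalues of $T_0$ and $T_0^\star$, is required to confirm that no zero meets $\Gamma^\pm$ away from the origin.
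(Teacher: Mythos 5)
Your treatment of the whole-line term and of the rays of $\Gamma^+$ with $\arg k\in\{\pi/3,2\pi/3\}$ is sound: there $\Im k=\tfrac{\sqrt3}{2}\lvert k\rvert>0$, one integration by parts gives $\lvert\zeta^+(k)/\Delta(k)\rvert\lesssim\lvert k\rvert^{-1}$, and the factor $\re^{-x\sqrt3\lvert k\rvert/2}$ yields a dominating function uniform on compact subsets of $(0,1)$. The genuine gap is in the sentence ``the $\Gamma^-$ integral is handled identically, using the decay of $\re^{ik(x-1)}$ on $\Gamma^-$''. By its definition as the positively oriented boundary of $\{k\in\C^-:\Re(ik^3)>0\}=\{-\pi<\arg k<-2\pi/3\}\cup\{-\pi/3<\arg k<0\}$, the contour $\Gamma^-$ contains the positive and negative real half-lines. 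On those portions $\Im k=0$, so $\lvert\re^{ik(x-1)}\rvert=\lvert\re^{ik^3t}\rvert=1$, and the integrand decays no faster than $\lvert k\rvert^{-1}$: for $k\to+\infty$ real, $\Delta(k)\sim\alpha\re^{-i\alpha k}$ and $\zeta^-(k)/\Delta(k)\sim u_0(1)/(-i\alpha k)$ plus similar $O(1/k)$ oscillatory terms carrying $u_0(0)$ and the interior jumps of $u_0$ (this is exactly the structure in equation~\eqref{eqn:rho.on.Delta}). The integral is therefore only conditionally convergent, your proposed dominating function does not exist there, and dominated convergence cannot deliver continuity in $x$. This is not a technicality: it is the entire difficulty of the theorem, and it is the only place where the hypothesis $t>0$ enters beyond making the phase oscillate.

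The paper's proof exists precisely to circumvent this. For the $\Gamma^-$ piece it does not estimate on the given contour at all; it deforms into the sector $E^-=\{k\in\C^-:\Re(ik^3)<0\}$, where $\re^{ik^3t}$ decays, and converts the integral by Jordan's lemma and residues into the rapidly convergent series~\eqref{eqn:beta0.E-IsSmooth} over the zeros $k_n$ of $\Delta$ on $-i\R^+$, which is $\ContinuousSpace^\infty[0,1]$ for every $t>0$. For the piece that cannot be so deformed, it isolates a single integral over $\partial E_2^+$ (which again contains a real half-line) and proves continuity by the lengthy oscillatory-integral argument of Proposition~\ref{contprop}: splitting at $\lvert k\rvert=\lvert\delta\rvert^{-1/3}$, two integrations by parts to expose the jump data as in~\eqref{eqn:rho.on.Delta}, and Dirichlet's test applied to the phase $k^3t+k(x+\delta)$, whose derivative $3k^2t+x+\delta$ is eventually monotone and bounded away from the frequencies $y_j$. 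To repair your proof you would need to supply an argument of this kind (stationary phase / van der Corput with estimates uniform in $x$) on every real-axis portion of the contours; absolute convergence alone will not do. Your closing remark about locating the zeros of $\Delta$ on the bounded part of the contours is a real but secondary issue (the zeros lie on the rays $\arg k\in\{\pi/6,5\pi/6,-\pi/2\}$, bounded away from $\Gamma^\pm$, as established in the reference the paper cites).
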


\begin{proof}[Proof of Theorem~\ref{thm:beta0.continuous}]
    We begin by arguing that some components of the contour integrals in equation~\eqref{eqn:soln.beta0} yield, for all $t>0$, functions $u(\argdot,t)$ which are $\ContinuousSpace^\infty[0,1]$.
    The final part of the proof, given as a separate proposition,  justifies continuity of the remaining components.

    As  shown in~\cite{pelloni2005spectral}, the zeros of $\Delta$ lie exactly on the rays $-i\alpha^j\R^+$ and asymptotically at $-i\alpha^j(2n+\frac13)\pi/\sqrt3$ for $n\in\N$ and $j\in\{0,1,2\}$, with exponentially decaying error.
    Moreover, because the zeros of $\Delta'$ lie asymptotically at $-i\alpha^j(2n+\frac13)\pi/\sqrt3$, at most finitely many zeros of $\Delta$ are nonsimple.
    Careful bounds can be obtained to show that all nonzero zeros are simple.
    There is a removable singularity of $\zeta^\pm/\Delta$ at $0$.
    We denote the nonzero zeros of $\Delta$ lying along $-i\R^+$ by $k_n$ for $n\in\N$.
    
    Consider $k\to\infty$ with $-2\pi/3 \leq \arg(k) \leq -\pi/3$.
    Away from zeros of $\Delta$,
    \[
        \Delta(k) = \re^{i\frac k2} \left( \alpha \re^{-i\frac{\sqrt3 ik}{2}} + \alpha^2 \re^{i\frac{\sqrt3 ik}{2}} \right) + \bigoh{\re^{-\frac{\abs{k}\sqrt3}2}}.
    \]
    Let $\epsilon>0$ be sufficiently small that both $u_0\rvert_{[0,\epsilon]}$ and $u_0\rvert_{[1-\epsilon,1]}$ are $\ContinuousSpace^1$ functions.
    Integrating by parts, for $k$ away from the zeros of $\Delta$,
    \begin{align*}
        \alpha^j\widehat{u_0}(\alpha^jk)
        &=
        \frac ik \left(
            \left[\re^{-i\alpha^jkz}u_0(z)\right]_{z=0}^{z=\epsilon}
            + \left[\re^{-i\alpha^jkz}u_0(z)\right]_{z=1-\epsilon}^{z=1} \right. \\
            &\hspace{4em} - \left.\left\{ \int_0^\epsilon + \int_{1-\epsilon}^1 \right\} \re^{-i\alpha^jkz} u^\prime_0(z)\D z
        \right)
        + \alpha^j \int_\epsilon^{1-\epsilon}\re^{-i\alpha^jkz} u_0(z) \D z \\
        &= \bigoh{k^{-1}\Delta(k)},
    \end{align*}
    so also is $\zeta^-(k)$.
    Define the sectors
    \[
        E^\pm = \{k\in\C^\pm \mbox{ such that } \Re(ik^3)<0 \}.
    \]
    By the asymptotic estimates established above, Jordan's lemma, and a residue calculation,
    \begin{equation} \label{eqn:beta0.E-IsSmooth}
        \int_{\partial E^-} \re^{ik(x-1)+ik^3t} \frac{\zeta^-(k)}{\Delta(k)} \D k
        =
        \sum_{n\in\N} \re^{ik_n(x-1)+ik_n^3t} \frac{\zeta^-(k_n)}{\Delta'(k_n)}.
    \end{equation}
    From the asymptotic formula for $k_n$, 
    \(
        \Delta'(k_n) = (-1)^{n+1} i \re^{(2n+\frac13)\frac\pi{2\sqrt3}} + o(1),
    \)
    so $\frac{\D^m}{\D k^m}\exp(ik_n(x-1))\zeta^-(k_n)/\Delta(k_n)=\bigoh{n^{m-1}}$ as $n\to\infty$, uniformly in $x\in[0,1]$.
    Because, for all $t>0$, $\re^{ik_n^3t}$ decays exponentially as $n\to\infty$, the series in equation~\eqref{eqn:beta0.E-IsSmooth} converges absolutely uniformly in $x$, along with all its termwise derivatives.
    Hence the integral in equation~\eqref{eqn:beta0.E-IsSmooth} represents a $\ContinuousSpace^\infty[0,1]$ function of $x$.
    Consequently, equation~\eqref{eqn:soln.beta0} may be reexpressed as
    \[
        2\pi u(x,t) = V_{\mathcal{C}}(x,t) - \int_{\partial E^+} \re^{ikx+ik^3t}\frac{\zeta^+(k)}{\Delta(k)} \D k,
    \]
    in which $V_{\mathcal{C}}$ is, for all $t>0$, a $\ContinuousSpace^\infty$ function of $x\in[0,1]$.
    
    Let
    \[
        \rho(k) = \re^{-ik} \widehat{u_0}(\alpha^2k) - \re^{-i\alpha^2k} \widehat{u_0}(k),
    \]
    so that $\zeta^+(k) = \alpha^2\rho(k) - \alpha\rho(\alpha k)$.
    The ratio $\rho(\alpha k)/\Delta(k)$ decays exponentially as $k\to\infty$ within the sector $\frac{2\pi}3 \leq \arg(k) \leq \pi$ away from the zeros of $\Delta$, as does $\rho(k)/\Delta(k)$ within the sector $0 \leq \arg(k) \leq \frac\pi3$.
    Hence, by a similar argument to above%
    ,
    for all $t>0$, these terms yield contributions to $u(x,t)$ that are $\ContinuousSpace^\infty[0,1]$ in $x$.
    Changing variables, so that the contour about the two connected components of $E^+$ is replaced with the boundary of the single sector $E_2^+=\{k\in\C$ with $2\pi/3<\arg(k)<\pi\}$, we find that
    \be\label{ucont12}
        2\pi u(x,t) = U_{\mathcal{C}}(x,t) - \int_{\partial E^+_2} \re^{ik^3t}\frac{\rho(k)}{\Delta(k)} \left( \alpha^2\re^{ikx} - \alpha\re^{i\alpha^2kx} \right) \D k,
    \ee
    in which $U_{\mathcal{C}}$ is, for all $t>0$, a $\ContinuousSpace^\infty$ function of $x\in[0,1]$.
 
To complete the proof of the theorem,  it remains to show that, for all $x\in(0,1)$, $t>0$, $\lim_{\delta\to0}u(x,t)-u(x+\delta,t)=0$.  This is the conclusion of Proposition \ref{contprop},  proved by a careful and lengthy computation, given in appendix. 
\end{proof}

It is an immediate corollary of Theorem~\ref{thm:beta0.continuous} that the boundary value problem with $\beta=0$ does not support revivals.

\subsection*{The case $\beta=1$}
In this case,we need to analyse the spectral structure of the operator
$$
T_1=-i\partial_{x}^3:{\mathcal D_1}\longrightarrow \Ltwozo.
$$
It follows from general spectral theory that $A$ is self-adjoint, hence that $iA$ is the generator of a $C_0$~one-parameter semigroup.
This guarantees, for any $u_0\in \Ltwozo$, the existence of a unique solution $u(x,t)\in \ContinuousSpace([0,\infty); \Ltwozo)$ of the associated boundary value problem \eqref{pseD}.  The problem is  conservative, so the $L^2(0,1)$ norm of the solution does not decay in time.

If $u_0\in {\mathcal D_1}$,  then $u$ is a strong solution: $u\in {\mathcal D_1}$ and
$$
u\in C([0,\infty); H^3(0,1))\cap \ContinuousSpace^1([0,\infty); \Ltwozo).
$$

We are interested in
the case when $u_0$ has some jumps discontinuities, but is otherwise smooth.
While this is the case of most interest, the theorem below is proved  in \cite{BFPS} for a more general class of initial data.
\begin{theorem}\label{lkdvthm}
    Assume that $u_0\in \BVzo$ is piecewise Lipschitz. 
    Then, the solution of the boundary value problem~\eqref{pseD}, with $\beta=1$,  admits the representation
    $$
        u(x,t)=U_{\mathcal R}(x,t)+U_{\mathcal C}(x,t), \qquad x\in(0,1),\;t>0,
    $$
    where
   
    $U_{\mathcal C}(\argdot,t)\in\Czopen$ for all $t\in\R$, and
    \begin{equation} \label{Ldef0}
        U_{\mathcal R}(x,t)=\sum_{n=1}^\infty 2 \operatorname{Re} \left[\widehat{G_{u_0}}(n)
        \re^{i(2n-\frac 1 3 )^3\pi^3t}\re^{i (2n-\frac 1 3 )\pi x} \right],
    \end{equation}
    where

At any time of the form   $t=\frac p{\pi^2 q}$, with $p,q\in\N$ coprime, the function  $U_{\mathcal R}(x,t)$ can be written as
    \begin{equation} \label{finrevf2NEW}
        U_{\mathcal R}\left(x,\frac p {q \pi^2}\right) = \Re\left\{ \re^{-i\frac{\pi}{27}\left(9x+\frac pq\right)} [L_1(x)+iL_2(x)-L_3] \right\},
    \end{equation}
    where
    \begin{align*}
        L_1(x)
        &= \sum_{k=0}^{q-1} d_k^{p,q}
        G_{u_0}\left(x+\frac p{3q}-\frac{k}{q}\right),
        \\
        L_2(x)
        &= \sum_{k=0}^{q-1} d_k^{p,q}
        [{\mathcal H}G_{u_0}]\left(x+\frac p{3q}-\frac{k}{q}\right),
        \\
        L_3
        &=
            \int_0^1G_{u_0}(y)\mathrm{d}y,
    \end{align*}
$\mathcal{H}$ denotes the $1$-periodic Hilbert transform,  and
 \be\label{dkpq}
        d_k^{p,q} = \frac1q \sum_{m=0}^{q-1} \re^{2\pi i\left( mk + \left[4m^3-2m^2\right]p \right)/q}.
    \ee
    \end{theorem}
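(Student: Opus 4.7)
The plan is to exploit the self-adjointness of $T_1$ to write $u(x,t)$ as a generalised Fourier series, separate a leading-order ``revival'' part $U_{\mathcal{R}}$ from an exponentially well-controlled continuous remainder $U_{\mathcal{C}}$, and then, at rational times $t=p/(q\pi^2)$, collapse the resulting one-sided trigonometric series into the announced finite sum of shifts and Hilbert transforms via a Gauss-sum computation.

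First I would establish the spectral asymptotics for $\beta=1$. The characteristic determinant is
$\Delta(k)=\re^{-ik}+\alpha\re^{-i\alpha k}+\alpha^2\re^{-i\alpha^2 k}+\re^{ik}+\alpha\re^{i\alpha k}+\alpha^2\re^{i\alpha^2 k}$,
whose zeros, by the same dominant-balance argument used in the $0<\beta<1$ case, satisfy $k_n=(2n-\tfrac13)\pi+\epsilon_n$ with $\epsilon_n=\mathcal{O}(\re^{-\sqrt3\pi n})$. Since $T_1$ is self-adjoint, its eigenfunctions $\{X_n\}$ form an orthonormal basis of $\Ltwozo$ and the eigenvalues $k_n^3$ are real, so $u(x,t)=\sum_n\langle u_0,X_n\rangle\re^{-ik_n^3 t}X_n(x)$. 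Comparing $X_n$ to the asymptotic eigenfunctions obtained by substituting the leading-order $k_n$, each $X_n$ agrees, modulo exponentially small perturbations, with a trigonometric polynomial of frequencies $\pm(2n-\tfrac13)\pi$. Substituting these asymptotics and collecting leading terms gives precisely~\eqref{Ldef0}, where $\widehat{G_{u_0}}(n)$ encapsulates the projection of $u_0$ onto the $n$-th mode; the $BV$/piecewise Lipschitz assumption on $u_0$ ensures $\widehat{G_{u_0}}(n)=\mathcal{O}(1/n)$. All correction terms are absorbed into $U_{\mathcal{C}}$, whose defining series has coefficients decaying exponentially in $n$; hence $U_{\mathcal{C}}(\argdot,t)$ is continuous on $(0,1)$ for every $t$.

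For the finite-sum expression~\eqref{finrevf2NEW}, substitute $t=p/(q\pi^2)$ and expand $(2n-\tfrac13)^3=8n^3-4n^2+\tfrac{2n}{3}-\tfrac{1}{27}$, which factorises the time phase as
\begin{equation*}
\re^{i(2n-1/3)^3\pi^3 t}=\re^{-i\pi p/(27q)}\,\re^{2\pi i p n/(3q)}\,\re^{2\pi i p(4n^3-2n^2)/q}.
\end{equation*}
Similarly $\re^{i(2n-1/3)\pi x}=\re^{-i\pi x/3}\re^{2\pi i n x}$, so the global phase $\re^{-i\pi(9x+p/q)/27}$ factors out in front. The third exponential above is periodic in $n$ modulo $q$, so I split $n=k+mq$ with $0\leq k\leq q-1$ and sum over $m\geq 0$. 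The sum of the third factor over the residue class $k$ produces exactly the Gauss sums $d_k^{p,q}$ of~\eqref{dkpq}, while the $m$-summed linear factor $\re^{2\pi i p n/(3q)}\re^{2\pi i n x}$ becomes a one-sided Fourier series in the variable $y=x+p/(3q)-k/q$. Because this sum runs only over $n\geq 1$, its real part reproduces the $1$-periodic extension $G_{u_0}(y)$ minus the constant $\int_0^1 G_{u_0}$, while its imaginary part is the periodic Hilbert transform $[\mathcal{H}G_{u_0}](y)$; these give $L_1$, $L_3$ and $L_2$ respectively.

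\textbf{Main obstacle.} The most delicate step is the combinatorial/arithmetic reorganisation: one must carefully justify termwise rearrangement of the one-sided series into $q$ arithmetic progressions under the bare $BV$ regularity of $G_{u_0}$, identify the inner sums with the shifted $G_{u_0}$ and its Hilbert transform, and match the residual phases so that everything factorises cleanly with the global prefactor $\re^{-i\pi(9x+p/q)/27}$. A secondary hurdle is the spectral asymptotics for $X_n$ itself: an error estimate uniform in $x\in[0,1]$ is needed, strong enough to guarantee that all correction terms genuinely lie in the continuous remainder $U_{\mathcal{C}}$, and this requires a quantitative Riesz-basis type estimate for the eigenfunctions of this non-classical third-order operator.
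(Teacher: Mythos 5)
The first thing to note is that the paper does not actually prove this theorem: it is quoted from \cite{BFPS} (``the theorem below is proved in \cite{BFPS} for a more general class of initial data''), so there is no in-paper proof to compare against line by line. Your outline does follow the route one would expect that proof to take, and the part of your sketch concerning rational times is correct and checkable: the expansion $(2n-\tfrac13)^3=8n^3-4n^2+\tfrac{2n}3-\tfrac1{27}$ does factor the phase as you claim, writing the $q$-periodic cubic phase as a discrete Fourier sum over $\mathbb{Z}_q$ produces exactly the coefficients $d_k^{p,q}$ of \eqref{dkpq}, the identity $\sum_{k=0}^{q-1}d_k^{p,q}=1$ explains why $L_3$ appears once rather than inside the $k$-sum, and the identification of the one-sided series $2\sum_{n\geq1}\widehat{G}(n)\re^{2\pi iny}$ with $G(y)+i[\mathcal{H}G](y)-\int_0^1G$ is the standard conjugate-function decomposition.

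The genuine gap is in the first half of your argument, where the theorem's actual content lives. You never construct $G_{u_0}$: you describe $\widehat{G_{u_0}}(n)$ as ``the projection of $u_0$ onto the $n$-th mode,'' but the eigenfunctions \eqref{ev1} are sums of three exponentials $\re^{-i\alpha^jk_nx}$, and for real $k_n\approx(2n-\tfrac13)\pi$ the $j=1,2$ terms grow like $\re^{\sqrt3k_nx/2}$; after normalising by $\langle X_n,X_n^\star\rangle\sim\re^{\sqrt3|k_n|}$ they become boundary layers at the endpoints rather than ``exponentially small perturbations'' of a trigonometric polynomial, and the same three-exponential structure enters $\langle u_0,X_n\rangle$. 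Extracting from this a single sequence $\widehat{G_{u_0}}(n)$ that is literally the Fourier coefficient sequence of one fixed periodic function $G_{u_0}$ (so that the Gauss-sum step can later reassemble \emph{translates of that function} and its Hilbert transform) is the substantive step, and it is asserted rather than carried out. Relatedly, the claim that the remainder $U_{\mathcal C}$ has ``coefficients decaying exponentially in $n$'' is too optimistic: for $u_0\in\BV$ the corrections are in general only algebraically small, and establishing $U_{\mathcal C}(\argdot,t)\in\Czopen$ for \emph{all} $t$ requires uniform convergence on compact subsets of $(0,1)$, not just summability of exponentially small tails. You flag the rearrangement of the conditionally convergent one-sided series into arithmetic progressions as the main obstacle, and it is indeed delicate for $\BV$ data, but as written the proposal is a plausible plan rather than a proof: the two points above would each need a dedicated lemma.
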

The conclusion of this theorem is that, up to a continuous perturbation,  the solution of \eqref{pseD} with $\beta=1$ at rational times $t=\frac p{\pi^2 q}$ is given by a finite superposition of copies of the initial datum
and of the 1-periodic Hilbert transform of the initial datum.
Since the periodic Hilbert transform maps every jump discontinuity to a logarithmic cusp, the profile of the solution at rational times, when the initial datum has jump discontinuities, is the superposition of a  continuous function with a finite number of jumps and logarithmic cusps, namely it has the \emph{weak cusp revival property} discussed in~\cite{BFPS}.

\subsection*{Numerical simulations}

\begin{figure}[ht]
  \centering
    \includegraphics[width=1\linewidth]{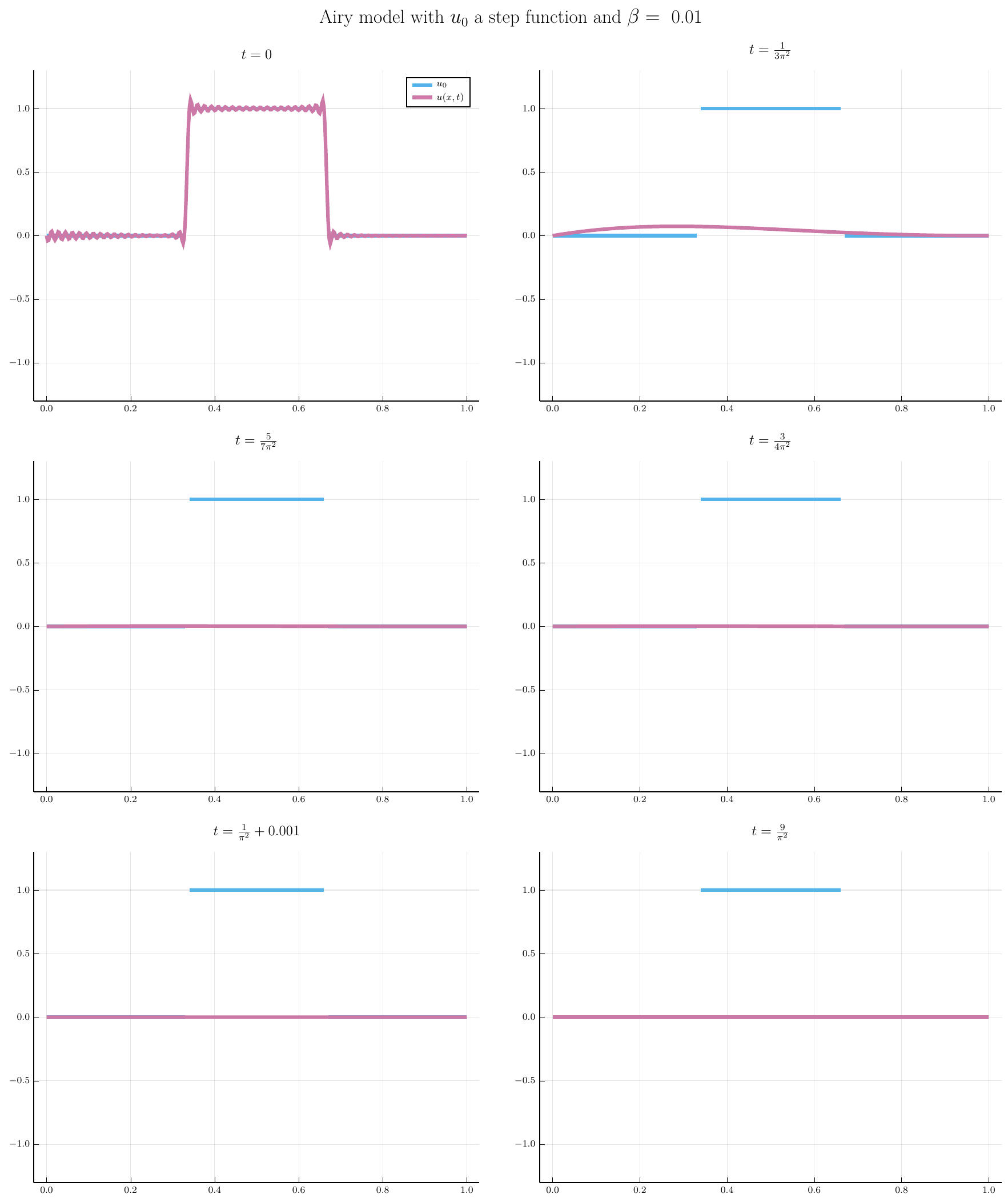}
  \caption{}
    \label{fig001}
   \end{figure}
   
\begin{figure}[ht]
  \centering
    \includegraphics[width=1\linewidth]{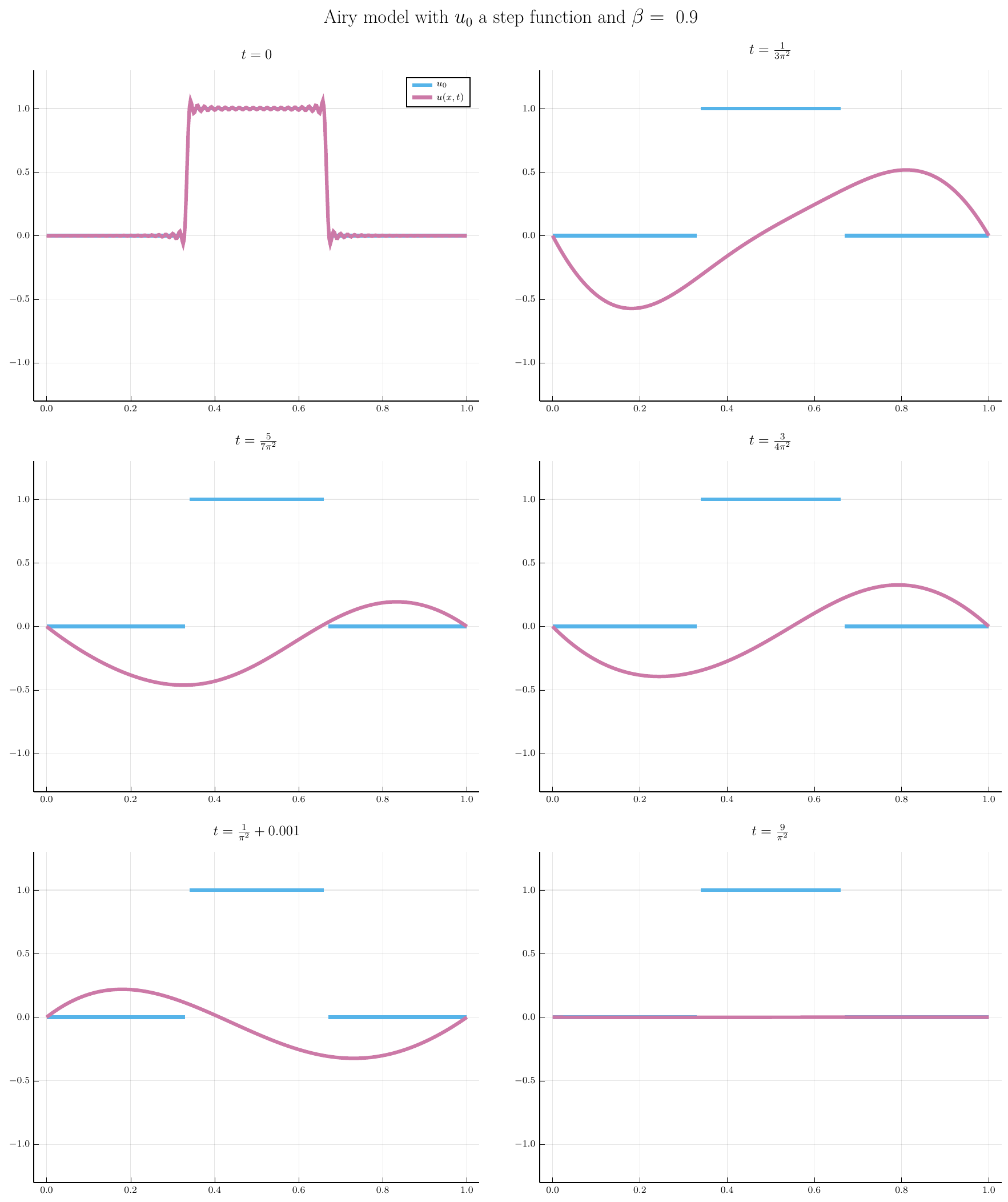}
  \caption{}
    \label{fig09}
   \end{figure}
 
\begin{figure}[ht]
  \centering
    \includegraphics[width=1\linewidth]{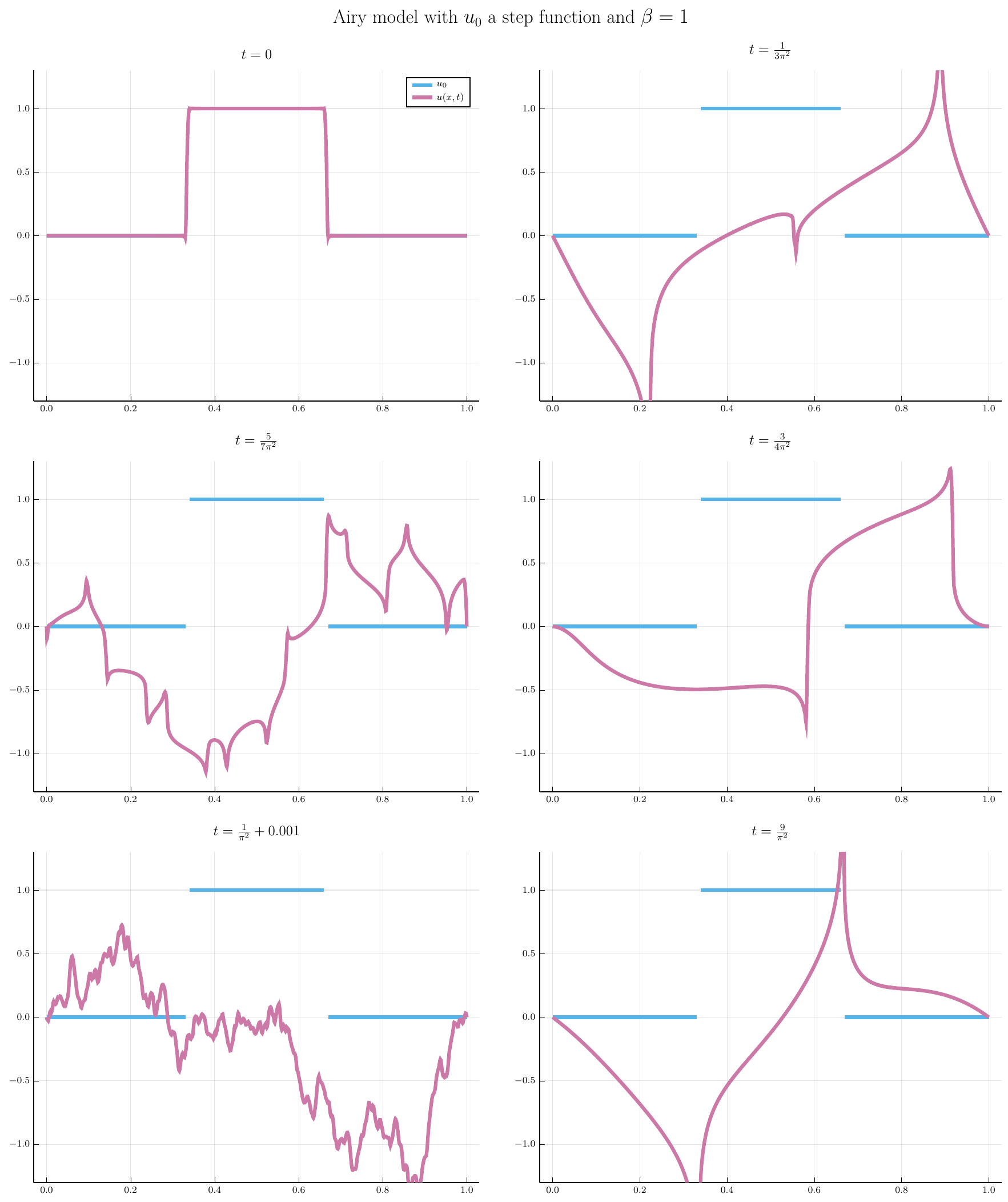}
  \caption{}
    \label{fig1}
   \end{figure}
   
We used Julia to plot the solution, approximated by the truncated eigenfunction series.
In the $\beta=1$ case, because the problem is selfadjoint, we used the ApproxFun package~\cite{ApproxFun.jl-2014} to compute the eigenvalues and eigenfunctions directly and evaluate the inner products in the solution representation.
When $0<\beta<1$, we instead calculate the $k_n$ as roots of $\Delta$, then use expression~\eqref{ev1} for the eigenfunctions.
For $k_n$ close to $0$, we use the Julia implementation~\cite{Gasdia19} of the GRPF algorithm~\cite{Kow2015a,Kow2018a}, based on the argument principle and, for $\lvert k_n \rvert \gg0$, we use the leading order asymptotic term in the expression for $k_n$ given in equation~\eqref{asymp1}.
We use the first 614 eigenvalues for the selfadjoint problem where $\beta=1$ and the first 120 for problems with $0<\beta<1$.
In the case $\beta=0$, as may be inferred from the delicacy of the asymptotic analysis in the proof of theorem~\ref{thm:beta0.continuous}, numerical evaluation of the solution representation via highly oscillatory contour integrals~\eqref{eqn:soln.beta0} is somewhat delicate, so is reserved for future study.

A simplified version of the GPRF algorithm to find all roots of an analytic function within a bounded region proceeds as follows.
Select sample points on the region's boundary, evaluate the principal value of the argument of the function at each sample point.
Proceeding anticlockwise about the boundary, calculate the difference between the arguments at adjacent sample points and count the number times this jumps from $\pi^-$ to $-\pi^+$, subtracting any jumps in the opposite direction.
By the argument principal, this is equal to the number of zeros interior to the region.
Now bisect the polygon, and repeat on each subregion.
Excluding from further consideration any subregions containing no roots, recursively subdivide and count roots to the desired accuracy.

%
%

We present plots of the solution starting from a piecewise constant initial datum for both cases. Specifically, in these plots the initial datum is the function
$$
u_0(x)=\begin{cases}
1&\frac13<x<\frac23,
\\
0& \mbox{otherwise}.
\end{cases}
$$
The plots for $\beta=0.01$ and $\beta=0.9$ clearly show  the solution starting from this discontinuous initial datum to be  smooth (and decaying) at all positive times.
The plots for $\beta=1$ show the cusps (which are infinite logarithmic cusps, though they are shown as finite by the graphic interface)  generated by the discontinuities at rational times, and the fractalisation phenomenon  at the generic time
$\displaystyle{t=\frac 1 {\pi^2}+0.001}$.

\section*{Acknowledgements}
BP was partially supported by a Leverhulme Research Fellowship. DAS gratefully acknowledges support from the Quarterly Journal of Mechanics and Applied Mathematics Fund for Applied Mathematics.

\section*{Appendix}

\begin{prop}\label{contprop}
Let $u(x,t)$ be defined by equation \eqref{ucont12}. 
Then, for all $x\in(0,1)$ and all $t>0$,
$$
\lim_{\delta\to 0}u(x+\delta,t)=u(x,t).
$$
\end{prop}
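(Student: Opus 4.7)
The aim is to show that, for each fixed $t > 0$, the contour integral
$$
I(x,t) = \int_{\partial E_2^+} \re^{ik^3 t} \frac{\rho(k)}{\Delta(k)} \left(\alpha^2 \re^{ikx} - \alpha \re^{i\alpha^2 kx}\right) dk
$$
is continuous in $x \in (0,1)$. My plan is to split $\widehat{u_0}$, and hence $\rho$, into a smooth remainder plus a finite number of explicit ``jump'' contributions, and to handle each by dominated convergence or, where necessary, by contour deformation in the spirit of the argument that produced \eqref{eqn:beta0.E-IsSmooth}.

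I would first parametrise the contour $\partial E_2^+$ by its two constituent rays $\{-s : s > 0\}$ and $\{\alpha s : s > 0\}$ and establish the asymptotic estimate $|\Delta(k)| \sim c\,\re^{|k|\sqrt{3}/2}$ as $|k| \to \infty$ on each, while verifying that the factors $\re^{ikx}$ and $\re^{i\alpha^2 kx}$ remain bounded or decay exponentially for $x \in [0,1]$. The piecewise $\ContinuousSpace^2$ hypothesis on $u_0$ then permits integrating by parts twice in $\widehat{u_0}(\lambda) = \int_0^1 \re^{-i\lambda y} u_0(y)\,dy$, yielding
$$
\widehat{u_0}(\lambda) = \sum_j \frac{c_j^{(1)} \re^{-i\lambda x_j}}{i\lambda} + \sum_j \frac{c_j^{(2)} \re^{-i\lambda x_j}}{(i\lambda)^2} + R(\lambda),
$$
where the sums run over the finitely many jumps of $u_0$ and $u_0'$ (including the endpoints $0$ and $1$) and $R(\lambda)$ is a Fourier integral of $u_0''$ bounded by $C/|\lambda|^2$ up to the usual growth factor in $|\Im \lambda|$. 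Substituting into $\rho$ decomposes the integrand of $I$ into a $\rho^{\mathrm{smooth}}$ piece and finitely many $\rho^{\mathrm{jump}}$ pieces. The smooth piece, thanks to the $|k|^{-2}$ prefactor combined with the exponential decay of $1/\Delta(k)$, is absolutely integrable on $\partial E_2^+$ with a bound uniform in $x \in [0,1]$, so its continuity in $x$ would follow from dominated convergence.

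Next, I would reduce the finitely many jump terms to integrals of the form
$$
\int_{\partial E_2^+} \frac{\re^{ik^3 t}\,\re^{ik P_j(x, x_j)}}{k^m \Delta(k)}\, dk, \qquad m \in \{1,2\},
$$
where each $P_j$ is an affine combination of $x$ and $x_j$ with coefficients in $\{1,\alpha,\alpha^2\}$. Those $P_j$ for which the exponential $\re^{ikP_j}$ is strictly dominated by the exponential decay of $1/\Delta(k)$ on both rays of $\partial E_2^+$ give absolutely convergent integrals, continuous in $x$ again by dominated convergence. For the remaining borderline cases, essentially those associated with the boundary jump at $x_j = 1$ where the exponential estimates are exactly balanced, I would deform the integration contour into $E_2^+$, where $\re^{ik^3 t}$ is exponentially decaying in $|k|^3$; this deformation picks up residues at the zeros of $\Delta$ along the ray $\arg k = 5\pi/6$, producing an absolutely and uniformly convergent residue series analogous to \eqref{eqn:beta0.E-IsSmooth}.

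The hard part will be this final step. The borderline terms cannot be treated by direct estimation along $\partial E_2^+$ and genuinely require contour deformation past the zeros of $\Delta$; a careful asymptotic analysis of the zeros $k_n$ near the ray $\arg k = 5\pi/6$ and of the corresponding residues is needed to establish uniform convergence of the residue series in $x$, and delicate cancellations between the various $\rho^{\mathrm{jump}}$ terms must be exploited so that no residue contribution exhibits a jump at any $x \in (0,1)$. This is the source of the ``careful and lengthy computation'' alluded to in the theorem statement.
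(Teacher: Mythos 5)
Your overall reduction (two integrations by parts in $\widehat{u_0}$, splitting $\rho/\Delta$ into an absolutely integrable $\bigoh{k^{-2}}$ remainder plus finitely many oscillatory terms of size $1/k$) matches the paper's starting point, equation~\eqref{eqn:rho.on.Delta}. But the proof has two genuine gaps. First, your classification of the problematic terms is wrong: it is not only the endpoint $x_j=1$ that is borderline. Every jump point $y_j\in[0,1]$ of $u_0$ contributes a term $\re^{iky_j}\delta_j/k$ to $\rho(-k)/\Delta(-k)$ on the negative real ray, and each such term, multiplied by the bounded oscillation $\re^{-ikx}$, gives an integrand of size exactly $1/k$ --- not absolutely integrable --- so ``continuity by dominated convergence'' is not available for any of them; they are all conditionally convergent oscillatory integrals. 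Second, and more seriously, the entire burden of the proposition falls on your final step, which you defer as ``the hard part'' rather than carry out. The contour-deformation/residue-series mechanism you propose there should make you suspicious: if the borderline contribution could be converted into a residue series over the zeros of $\Delta$ on $\arg k=5\pi/6$ with arcs vanishing, the factor $\re^{ik_n^3t}=\re^{-\lvert k_n\rvert^3 t}$ would make that series converge uniformly together with all its $x$-derivatives, yielding $u(\argdot,t)\in\ContinuousSpace^\infty[0,1]$. That overshoots the theorem (which only claims $\Czopen$) and sits uneasily with the known spectral pathology of the $\beta=0$ problem, for which no eigenfunction-series representation of the solution exists. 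The structural obstruction is that, unlike $\zeta^-/\Delta$ on $\partial E^-$ (which is $\bigoh{k^{-1}}$, so the Jordan's-lemma argument of equation~\eqref{eqn:beta0.E-IsSmooth} applies), the combination $\rho(k)\re^{ikx}/\Delta(k)$ is merely bounded on the rays of $\partial E_2^+$; the paper deliberately stops the residue machinery at equation~\eqref{ucont12} for exactly this reason.

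The paper's actual argument avoids deformation entirely and estimates the difference $u(x+\delta,t)-u(x,t)$ directly. The factor $\alpha^2\re^{ikx}[1-\re^{ik\delta}]-\alpha\re^{i\alpha^2kx}[1-\re^{ik\alpha^2\delta}]$ is kept intact, the ray is split at $\lvert k\rvert=\abs{\delta}^{-1/3}$, the low-frequency part is bounded using $\abs{1-\re^{ik\delta}}\leq k\abs{\delta}$, and the high-frequency tails of the $1/k$ jump terms are controlled by Dirichlet's test applied to the cubic phase $\phi_\delta(k)=k^3t+k(x+\delta)$, whose growing derivative makes the tail integrals $\bigoh{1/\phi_\delta'(\abs{\delta}^{-1/3})}$, uniformly in the jump locations. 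If you want to salvage your plan, you should replace the deformation step with an oscillatory-integral estimate of this kind; as written, the step that actually proves the proposition is missing.
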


\begin{proof}
To prove the claim, we need to  evaluate the $\delta\to0$ limit of
    \[
        \int_{\partial E^+_2} \re^{ik^3t}\frac{\rho(k)}{\Delta(k)} \left( \alpha^2\re^{ikx}\left[1-\re^{ik\delta}\right] - \alpha\re^{i\alpha^2kx}\left[1-\re^{ik\alpha^2\delta}\right] \right) \D k.
    \]
    Consider the integral along the negative real axis, which can be rewritten as $J_1+\alpha^2J_2-\alpha^2J_3-\alpha J_4$, for
    \begin{align*}
        J_1 &= \int_0^{\abs\delta^{\frac{-1}3}} \re^{-ik^3t}\frac{\rho(-k)}{\Delta(-k)} \left( \alpha^2\re^{-ikx}\left[1-\re^{-ik\delta}\right] - \alpha\re^{-i\alpha^2kx}\left[1-\re^{-ik\alpha^2\delta}\right] \right) \D k, \\
        J_2 &= \int_{\abs\delta^{\frac{-1}3}}^\infty \re^{-ikx-ik^3t}\frac{\rho(-k)}{\Delta(-k)} \D k, \\
        J_3 &= \int_{\abs\delta^{\frac{-1}3}}^\infty \re^{-ik(x+\delta)-ik^3t}\frac{\rho(-k)}{\Delta(-k)} \D k, \\
        J_4 &= \int_{\abs\delta^{\frac{-1}3}}^\infty \re^{-ik^3t}\frac{\rho(-k)}{\Delta(-k)} \re^{-i\alpha^2kx}\left[1-\re^{-ik\alpha^2\delta}\right] \D k.
    \end{align*}
    We shall argue that each has limit $0$ as $\delta\to0$.
    The argument for the integral along the other ray of $\partial E_2^+$ is analogous, so shall be omitted.
    
    Let
    \[
        m = \sup_{x\in[0,\infty)} \abs{ \frac{\rho(-k)}{\Delta(-k)} },
    \]
    which, from our earlier analysis of the asymptotic behaviour of these functions and the locus of the zeros of $\Delta$, is finite.
    Then
    \begin{align*}
        \abs{J_1} &\leq m \left( \int_0^{\abs\delta^{\frac{-1}3}} \abs{1-\re^{-ik\delta}} \D k + \int_0^{\abs\delta^{\frac{-1}3}} \abs{1-\re^{-ik\alpha^2\delta}} \D k \right) \\
        &\leq m \left( \int_0^{\abs\delta^{\frac{-1}3}} k\abs{\delta} \D k + \int_0^{\abs\delta^{\frac{-1}3}} 2k\abs{\delta} \D k \right),
    \end{align*}
    where the inequality in the first integrand is bounding the chord by the arclength, and the second follows from the fact that
    \[
        \abs{1-\re^{-i \alpha^2k}}^2 = \left( 1-\re^{-i \alpha^2k} \right) \left( 1-\re^{i \alpha k} \right) = k^2-k^33\sqrt3 + \bigoh{k^4}
    \]
    as $\abs k \to 0$.
    Hence $\abs{J_1}\to0$ as $\delta\to0$.
    We also bound
    \[
        \abs{J_4} \leq 2m \int_{\abs\delta^{\frac{-1}3}}^\infty \re^{-kx\sqrt3/2} \D k,
    \]
    which has limit $0$ as $\delta\to0$.
    
    Integrating by parts twice in the numerator, we find that
    \begin{equation} \label{eqn:rho.on.Delta}
        \frac{\rho(-k)}{\Delta(-k)} = \frac{i\alpha}{k}\sum_{j=1}^{n-1} \re^{iky_j}\delta_j - \frac{\sqrt3u_0(1)}k \re^{iky_n} - \frac{i\alpha u_0(0)}k\re^{iky_0} + \bigoh{k^{-2}},
    \end{equation}
    in which $[0=y_0,y_1,\ldots,y_n=1]$ is a partition such that each $u_0\rvert_{(y_j,y_{j+1})}$ is extensible to a $\ContinuousSpace^2[y_j,y_{j+1}]$ function, and $\delta_j=u_0(y_j^+)-u_0(y_j^-)$.
    We split each of $J_2$ and $J_3$ into $n+2$ terms: one for each of the oscillatory behaviours $\re^{iky_j}$ on the right of equation~\eqref{eqn:rho.on.Delta}, and one for the asymptotic term.
    The asymptotic term yields an absolutely convergent improper integral for each of $J_2$ and $J_3$; in the case of $J_3$, the convergence is uniform in the $\delta$ appearing in the integrand.
    Hence, in the limit $\delta\to0$, we have convergence to $0$ of the part of each of $J_2$ and $J_3$ resulting from the $\bigoh{k^{-2}}$ term.
    
    Let $\phi_\delta(k) = k^3t+k(x+\delta)$.
    Then there exists an $M>0$ such that, for sufficiently large $a>0$, all $b>a$, and all sufficiently small $\delta>0$, we have that $\phi_\delta''(k)>0$ and $\phi_\delta'(k)-y_j>M>0$.
    Therefore,
    \begin{align*}
        \abs{\int_a^b \re^{i(y_jk-\phi(k))} \D k}
        &=
        \abs{ \frac{\re^{i(y_jb-\phi_\delta(b))}}{i(y_j-\phi_\delta'(b))} - \frac{\re^{i(y_ja-\phi_\delta(a))}}{i(y_j-\phi_\delta'(a))} - \int_a^b \frac{\phi''(k) \re^{i(y_jk-\phi_\delta(k))}}{i(y_j-\phi_\delta'(k))^2} \D k } \\
        &\leq
        \frac1{\phi_\delta'(b)-y_j} + \frac1{\phi_\delta'(a)-y_j} + \int_a^b \frac{\phi_\delta''(k)}{(\phi_\delta'(k)-y_j)^2} \D k \\
        &= \frac2{\phi_\delta'(a)-y_j} < \frac2M.
    \end{align*}
    Hence, by Dirichlet's test for convergence of improper integrals, each of the other $n+1$ constituents of each of $J_2$ and $J_3$ converges.
    Taking the limit $\delta\to0$,
    these $n+1$ constituents of each of $J_2$ and $J_3$ converge to $0$.
    So also do the whole of $J_2$ and $J_3$ converge to $0$ in the $\delta\to0$ limit.
\end{proof}

\newpage
\bibliographystyle{amsplain}
\bibliography{references}

\providecommand{\bysame}{\leavevmode\hbox to3em{\hrulefill}\thinspace}
\providecommand{\MR}{\relax\ifhmode\unskip\space\fi MR }
\providecommand{\MRhref}[2]{%
  \href{http://www.ams.org/mathscinet-getitem?mr=#1}{#2}
}
\providecommand{\href}[2]{#2}
\begin{thebibliography}{10}

\bibitem{ABS2022a}
S.~A. Aitzhan, S.~Bhandari, and D.~A. Smith, \emph{Fokas diagonalization of
  piecewise constant coefficient linear differential operators on finite
  intervals and networks}, Acta Appl. Math. \textbf{177} (2022), no.~2, 1--66.

\bibitem{bsz}
Jerry~L Bona, Shu~Ming Sun, and Bing-Yu Zhang, \emph{A nonhomogeneous
  boundary-value problem for the {K}orteweg--de {V}ries equation posed on a
  finite domain}, Communications in Partial Differential Equations \textbf{28}
  (2003), no.~7--8, 1391--1436.

\bibitem{BFPS}
L.~Boulton, G~Farmakis, B.~Pelloni, and D.A. Smith, \emph{Jumps and cusps:
  Talbot effect in non-periodic dispersive pdes}, arXiv
  http://arxiv.org/abs/2403.01117, 2024.

\bibitem{BFP}
Lyonell Boulton, George Farmakis, and Beatrice Pelloni, \emph{Beyond periodic
  revivals for linear dispersive {PDE}s}, Proceedings of the Royal Society A
  \textbf{477} (2021), no.~2251, 20210241.

\bibitem{boulton2020new}
Lyonell Boulton, Peter~J. Olver, Beatrice Pelloni, and David~A. Smith,
  \emph{New revival phenomena for linear integro--differential equations},
  Studies in Applied Mathematics \textbf{147} (2021), no.~4, 1209--1239.

\bibitem{deconinck2014method}
Bernard Deconinck, Thomas Trogdon, and Vishal Vasan, \emph{The method of fokas
  for solving linear partial differential equations}, SIAM Review \textbf{56}
  (2014), no.~1, 159--186.

\bibitem{DunfSchv2}
N~Dunford and J~T Schwartz, \emph{Linear operators, part {II} spectral theory
  self adjoint operators in hilbert spaces}, Wiley Interscience, 1963.

\bibitem{erdougan2016dispersive}
M~Burak Erdo{\u{g}}an and Nikolaos Tzirakis, \emph{Dispersive partial
  differential equations. wellposedness and applications}, vol.~86, London
  Mathematical Society Student Texts. Cambridge University Press, Cambridge,
  2016.

\bibitem{farmakis2024}
G~Farmakis, \emph{Revivals in time-evolution quasi-periodic problems}, arXiv
  preprint, 2311.02780, 2023.

\bibitem{farmakis2023new}
George Farmakis, Jing Kang, Peter~J Olver, Changzheng Qu, and Zihan Yin,
  \emph{New revival phenomena for bidirectional dispersive hyperbolic
  equations}, arXiv preprint arXiv:2309.14890.

\bibitem{fokas2005transform}
AS~Fokas and Beatrice Pelloni, \emph{A transform method for linear evolution
  pdes on a finite interval}, IMA journal of applied mathematics \textbf{70}
  (2005), no.~4, 564--587.

\bibitem{smithfokas}
AS~Fokas and David~A Smith, \emph{Evolution pdes and augmented eigenfunctions.
  finite interval}, Advances in Differential Equations \textbf{21} (2016),
  no.~7/8, 735--766.

\bibitem{Gasdia19}
Forrest Gasdia, \emph{{R}oots{A}nd{P}oles.jl: Global complex roots and poles
  finding in the {J}ulia programming language}, 2019.

\bibitem{jackson}
Dunham Jackson, \emph{Expansion problems with irregular boundary conditions},
  Proceedings of the American Academy of Arts and Sciences \textbf{51}.

\bibitem{Kow2015a}
Piotr Kowalczyk, \emph{Complex root finding algorithm based on delaunay
  triangulation}, ACM Trans. Math. Softw. \textbf{41} (2015), no.~3.

\bibitem{Kow2018a}
\bysame, \emph{Global complex roots and poles finding algorithm based on phase
  analysis for propagation and radiation problems}, IEEE Transactions on
  Antennas and Propagation \textbf{66} (2018), no.~12, 7198--7205.

\bibitem{langer}
Rudolph~E Langer, \emph{On the zeros of exponential sums and integrals}, Bull.
  AMer. Math. Soc. \textbf{37} (1931).

\bibitem{mclau}
Kenneth DT-R McLaughlin and Nigel~JE Pitt, \emph{On ringing effects near jump
  discontinuities for periodic solutions to dispersive partial differential
  equations}, arXiv preprint arXiv:1107.1571 (2011).

\bibitem{olver2010dispersive}
Peter~J Olver, \emph{Dispersive quantization}, The American Mathematical
  Monthly \textbf{117} (2010), no.~7, 599--610.

\bibitem{olver2018revivals}
Peter~J Olver, Natalie~E Sheils, and David~A Smith, \emph{Revivals and
  fractalisation in the linear free space {S}chr\"{o}dinger equation},
  Quarterly of Applied Mathematics \textbf{78} (2020), no.~2, 161--192.

\bibitem{ApproxFun.jl-2014}
Sheehan Olver and Alex Townsend, \emph{A practical framework for
  infinite-dimensional linear algebra}, Proceedings of the 1st Workshop for
  High Performance Technical Computing in Dynamic Languages -- HPTCDL `14,
  {IEEE}, 2014.

\bibitem{pelloni2005spectral}
Beatrice Pelloni, \emph{The spectral representation of two-point boundary-value
  problems for third-order linear evolution partial differential equations},
  Proceedings of the Royal Society A: Mathematical, Physical and Engineering
  Sciences \textbf{461} (2005), no.~2061, 2965--2984.

\bibitem{pelloni2013spectral}
Beatrice Pelloni and David~A Smith, \emph{Spectral theory of some
  non-selfadjoint linear differential operators}, Proceedings of the Royal
  Society A: Mathematical, Physical and Engineering Sciences \textbf{469}
  (2013), no.~2154, 20130019.

\bibitem{smith2012}
David~A. Smith, \emph{Well-posed two-point initial-boundary value problems with
  arbitrary boundary conditions},  \textbf{152} (2012), 473--496.

\bibitem{Smi2012b}
\bysame, \emph{Well-posedness and conditioning of 3rd and higher order
  two-point initial-boundary value problems}, arXiv:1212.5466, 2012.

\bibitem{smith2020revival}
\bysame, \emph{Revivals and fractalization}, Dynamical System Web (2020), 1--8.

\end{thebibliography}

\end{document}